\newtheorem{Definition}{Definition}[section]
\newtheorem{Proposition}[Definition]{Proposition}
\newtheorem{Theorem}[Definition]{Theorem}
\newtheorem{Lemma}[Definition]{Lemma}
\newtheorem{Corollary}[Definition]{Corollary}
\newcommand{\add}{{\rm add}}
\newcommand{\con}{{\rm con}}
\newcommand{\domdim}{{\rm dom.dim}}
\newcommand{\Hom}{{\rm Hom }}
\newcommand{\soc}{{\rm soc}}
\renewcommand{\top}{{\rm top}}
\newcommand{\nustHom}{{\rm \underline{Hom}}^{\nu}}
\newcommand{\repdim}{{\rm rep.dim }}
\newcommand{\gldim}{{\rm gl.dim}}
\newcommand{\findim}{{\rm fin.dim}}
\newcommand{\End}{{\rm End}}
\newcommand{\Ext}{{\rm Ext}}
\newcommand{\Coker}{{\rm Coker}}
\newcommand{\Ker}{{\rm Ker}}
\newcommand{\im}{{\rm Im}\,}
\newcommand{\cpx}[1]{#1^{\bullet}}
\newcommand{\D}[1]{\mathscr{D}(#1)}
\newcommand{\Db}[1]{ \mathscr{D}^{\rm b}(#1)}
\newcommand{\C}[1]{\mathscr{C}(#1)}
\newcommand{\Cb}[1]{{\mathscr{C}^b}(#1)}
\newcommand{\K}[1]{\mathscr{K}(#1)}
\newcommand{\Kb}[1]{ \mathscr{K}^{\rm b}(#1)}
\newcommand{\modcat}[1]{#1\mbox{{\rm -mod}}}
\newcommand{\stmodcat}[1]{#1\mbox{{\rm -{\underline{mod}}}}}
\newcommand{\stimodcat}[1]{#1\mbox{-}{\overline{\rm mod}}}
\newcommand{\pmodcat}[1]{#1\mbox{{\rm -proj}}}
\newcommand{\imodcat}[1]{#1\mbox{{\rm -inj}}}
\newcommand{\nust}[1]{\nu_{#1}\mbox{{\rm -Stp}}}
\newcommand{\nustcat}[1]{#1\mbox{{\rm -\underline{mod}}}^{\nu}}
\newcommand{\nustmor}[1]{\underline{#1}}
\newcommand{\Ex}[3]{{\rm E}_{#1}^{#2}(#3)}
\renewcommand{\leq}{\leqslant}
\renewcommand{\geq}{\geqslant}
\newcommand{\lra}{\longrightarrow}
\newcommand{\lraf}[1]{\stackrel{#1}{\lra}}
\newcommand{\llaf}[1]{\stackrel{#1}{{\Longleftarrow}}}
\newcommand{\rlaf}[1]{\stackrel{#1}{\Longrightarrow}}
\title{On iterated almost $\nu$-stable derived equivalences}
\author{Wei Hu}
\date{}
\begin{document}
\maketitle

\begin{abstract}
    In a recent paper \cite{HuXi3}, we introduced a classes of
    derived equivalences called almost $\nu$-stable derived
    equivalences. The crucial property is that an almost
    $\nu$-stable derived equivalence always induces a stable
    equivalence of Morita type, which generalizes a well-known
    result of Rickard: derived-equivalent self-injective algebras
    are stably equivalent of Morita type. In this paper, we shall
    consider the compositions of almost $\nu$-stable derived
    equivalences and their quasi-inverses, which are called iterated
    almost $\nu$-stable derived equivalences. We
    give a sufficient and necessary condition for
    a derived equivalence to be an iterated almost $\nu$-stable
    derived equivalence, and give an explicit construction of the stable equivalence functor
    induced by an iterated almost $\nu$-stable derived equivalence.
    As a consequence, we get some new sufficient
    conditions for a derived equivalence between general
    finite-dimensional algebras to induce a stable equivalence of
    Morita type.
\end{abstract}

\renewcommand{\thefootnote}{\alph{footnote}}
\renewcommand{\thefootnote}{\alph{footnote}}
\setcounter{footnote}{-1} \footnote{2000 Mathematics Subject
Classification: 18E30,16G10;18G20,16D90.}
\renewcommand{\thefootnote}{\alph{footnote}}
\setcounter{footnote}{-1} \footnote{Keywords: derived equivalence,
stable equivalence of Morita type, tilting complex.}

\section{Introduction}

In \cite{HuXi3}, we introduced a class of derived equivalences
called almost $\nu$-stable derived equivalences. The crucial
property is that an almost $\nu$-stable derived equivalence always
induces a stable equivalence of Morita type, which generalizes a
classical result of Rickard (\cite[Corollary 5.5]{RickDFun}). This
also gives a sufficient condition for a derived equivalence between
general finite-dimensional algebras to induce a stable equivalence
of Morita type. Note that many homological dimensions, such as
global dimension, finitistic dimension, and representation
dimension, are not invariant under derived equivalences in general.
But they are all preserved by stable equivalences of Morita type.
So, this also helps us to compare the homological dimensions of
derived-equivalent algebras.

 Let us first recall the definition of almost
 $\nu$-stable derived equivalences. Let $F:\Db{A}\lra\Db{B}$ be a
derived equivalence between two Artin algebras $A$ and $B$, where
$\Db{A}$ and $\Db{B}$ stand for the derived categories of bounded
complexes over $A$ and $B$, respectively. We use $F^{-1}$ to denote
a quasi-inverse of $F$.  $F$ is called an {\em almost $\nu$-stable
derived equivalence} if the following hold:

 \smallskip
   (1) {\it The tilting complex $\cpx{T}$ associated to $F$ has the
   following form: }
   $$0\lra T^{-n}\lra\cdots\lra T^{-1}\lra T^0\lra 0$$
In this case, the tilting $\cpx{\bar{T}}$ associated to $F^{-1}$ has
the following form (see \cite[Lemma 2.1]{HuXi3}):
$$0\lra \bar{T}^{0}\lra\bar{T}^1\lra\cdots\lra \bar{T}^{n}\lra 0$$

   (2) {\it  $\add(\bigoplus_{i=1}^nT^{-i})=\add(\bigoplus_{i=1}^n\nu_AT^{-i})$
   and
   $\add(\bigoplus_{i=1}^n\bar{T}^{i})=\add(\bigoplus_{i=1}^n\nu_B\bar{T}^{i})$, where $\nu$ is the Nakayama functor.}

\medskip
Let us remark that the composition of two almost $\nu$-stable
derived equivalences (or their quasi-inverses) is no longer almost
$\nu$-stable in general. If a derived equivalence is a composition
$F\simeq F_1F_2\cdots F_m$ with $F_i$ or $F_i^{-1}$ being an almost
$\nu$-stable derived equivalence for all $i$, then $F$ is called an
{\em iterated almost $\nu$-stable derived equivalence}. By
definition, we see that an almost $\nu$-stable derived equivalence
and its quasi-inverse are iterated almost $\nu$-stable derived
equivalences, and that the composition of two iterated almost
$\nu$-stable derived equivalences is again an iterated almost
$\nu$-stable derived equivalence. Clearly, an iterated almost
$\nu$-stable derived equivalence always induces a stable equivalence
of Morita type, and therefore the involved algebras have many common
homological dimensions. But the problem is:

\medskip
{\parindent=0pt\bf Question: } {\it Given a derived equivalence $F$,
how to determine whether $F$ is iterated almost $\nu$-stable or
not?}

\medskip
The main purpose of this note is to give a complete answer to the
above question. For a bounded complex $\cpx{X}$ over an algebra $A$,
we use $X^{\pm}$ to denote $\bigoplus_{i\neq 0}X^i$. The main result
of this note can be stated as the following theorem.
\begin{Theorem}
  Let $F:\Db{A}\lra\Db{B}$ be a derived equivalence between two
  Artin
  algebras $A$ and $B$. Suppose that
  $\cpx{T}$ and $\cpx{\bar{T}}$ are the tilting complexes associated
  to $F$ and $F^{-1}$, respectively.
   Then $F$ is an iterated almost
  $\nu$-stable derived equivalence if and only if $\add(_AT^{\pm})=\add(\nu_AT^{\pm})$ and
  $\add(_B\bar{T}^{\pm})=\add(\nu_B\bar{T}^{\pm})$.\label{Theorem 1}
\end{Theorem}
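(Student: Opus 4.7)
The plan is to establish the two directions separately.

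For the \emph{necessary} direction, I would proceed by induction on the number of factors in a decomposition $F \simeq G_1 G_2 \cdots G_k$ with each $G_i$ or $G_i^{-1}$ almost $\nu$-stable. The base case $k=1$ is essentially the definition: if $F$ is almost $\nu$-stable, then $\cpx{T}$ lives in degrees $[-n,0]$, so $T^{\pm} = \bigoplus_{i=1}^{n}T^{-i}$, and condition (2) of the definition is precisely $\add(T^{\pm})=\add(\nu_{A}T^{\pm})$; the statement for $\cpx{\bar{T}}$ follows symmetrically via \cite[Lemma 2.1]{HuXi3}. The case where $F^{-1}$ is almost $\nu$-stable is handled by symmetry. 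The inductive step reduces to showing that the condition is preserved under composition of derived equivalences satisfying it. Using the explicit description of the tilting complex of a composite (in terms of applying one equivalence to the tilting complex of the other) together with a minimal representative in $\Kb{\pmodcat{A}}$, one verifies the projective-injectivity of its non-zero-degree terms. The key reformulation is that ``$\add(T^{\pm})=\add(\nu T^{\pm})$'' amounts to the non-zero-degree terms of a minimal representative being projective-injective, a property that is stable under the operations involved.

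For the \emph{sufficient} direction, I would induct on the width $w(\cpx{T}) := n + m$, where $\cpx{T}$ is concentrated in degrees $[-n,m]$ with $n,m \geq 0$. In the base case $m = 0$, the tilting complex $\cpx{T}$ is in non-positive degrees, so the hypothesis $\add(T^{\pm})=\add(\nu_{A}T^{\pm})$ is exactly condition (2) of the almost $\nu$-stable definition for $\cpx{T}$; by \cite[Lemma 2.1]{HuXi3}, $\cpx{\bar{T}}$ is in non-negative degrees, and the hypothesis on $\cpx{\bar{T}}$ supplies condition (2) for $\cpx{\bar{T}}$. Hence $F$ itself is almost $\nu$-stable. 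The case $n=0$ is symmetric, giving that $F^{-1}$ is almost $\nu$-stable, so $F$ is iterated almost $\nu$-stable.

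For the inductive step with $n, m \geq 1$, the strategy is to construct an intermediate algebra $C$ and a factorization $F \simeq F_2 \circ F_1$, where $F_1:\Db{A} \to \Db{C}$ has a one-sided tilting complex (so $F_1$ or $F_1^{-1}$ is almost $\nu$-stable) and $F_2:\Db{C} \to \Db{B}$ has a tilting complex of strictly smaller width still satisfying the hypothesis. A natural candidate for $F_1$ is the derived equivalence associated to a truncation-type subcomplex of $\cpx{T}$ built from its non-negative part, roughly of the form $(0 \to T^0 \to T^1 \to \cdots \to T^m \to 0)$ with the original differentials, possibly augmented to ensure it generates $\Kb{\pmodcat{A}}$. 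The crucial technical checks are: (i) the constructed complex is a tilting complex, which uses the projective-injectivity of $T^1,\ldots,T^m$ to see that these terms lie in $\add({}_A A)$ and that the required self-$\Hom$-vanishing descends from $\cpx{T}$; (ii) $F_1^{-1}$ satisfies the definition of an almost $\nu$-stable derived equivalence; and (iii) the tilting complex of $F_2 = F \circ F_1^{-1}$, namely $F_1(\cpx{T})$ in $\Db{C}$, has width strictly less than $n+m$ and its non-zero-degree terms, together with those of the dual tilting complex, remain projective-injective.

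The main obstacle will be step (iii): one must identify a concrete representative of $F_1(\cpx{T})$ as a complex of $C$-projectives and check the projective-injectivity of its non-zero-degree terms, the heuristic being that the positive-degree part of $\cpx{T}$ has been ``absorbed'' into $F_1$ so that only the $[-n,0]$ part survives. The corresponding condition on the dual tilting complex of $F_2$ must be checked similarly, leveraging the hypothesis on $\cpx{\bar{T}}$ together with the compatibility of the Nakayama functor with almost $\nu$-stable derived equivalences on projective-injective modules established in \cite{HuXi3}. Once (i)--(iii) are in hand, the inductive hypothesis applies to $F_2$, and $F \simeq F_2 \circ F_1$ expresses $F$ as a composition of almost $\nu$-stable derived equivalences (and quasi-inverses), closing the induction.
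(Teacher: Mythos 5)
Your argument for the necessary direction matches the paper's: reduce to a lemma that the condition is preserved under composition of two equivalences (the paper's Lemma~\ref{necessaryCond}), proved by computing the tilting complex of the composite via Lemma~\ref{lemmaFormofFX} and reading off the terms of a minimal (radical) representative. That part is fine.

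The sufficient direction is where your proposal has a genuine gap. You propose to build the auxiliary equivalence $F_1$ from ``a truncation-type subcomplex of $\cpx{T}$ built from its non-negative part, $(0\to T^0\to\cdots\to T^m\to 0)$, possibly augmented to ensure it generates.'' The difficulty is that a brutal truncation of a tilting complex is essentially never a tilting complex: $\Hom(\sigma_{\geq 0}\cpx{T},\,\sigma_{\geq 0}\cpx{T}[i])=0$ for $i\neq 0$ does not follow from the corresponding property of $\cpx{T}$, and ``augmenting to ensure generation'' does nothing to restore the missing self-orthogonality (in fact adding more summands typically makes it harder). Your steps (i)--(iii) all hinge on this unconstructed object, and (iii) in particular (identifying a concrete complex of $C$-projectives representing $F_1(\cpx{T})$ and checking its terms) cannot even be started without knowing what $F_1$ is. The paper avoids this entirely: it does not touch $\cpx{T}$ when building the auxiliary equivalence. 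Instead it invokes the dual of Hoshino--Kato \cite[Prop.\ 3.2]{IdemsTilting} to produce a tilting complex $\cpx{Q}=\cpx{R}\oplus {}_AE[-m]$ over $A$, where $\cpx{R}:0\to A\to R^1\to\cdots\to R^m\to 0$ is built from left $\add({}_AE)$-approximations, $m$ being the top degree of $\cpx{T}$. The resulting equivalence $H$ with quasi-inverse $G$ has the two key features that make the argument go: $H({}_AE)\simeq{}_CP[m]$ with ${}_CP\in\nust{C}$, and $H(A)$ sits in degrees $[-m,0]$ with negative terms in $\nust{C}$. Since $T^i\in\nust{A}=\add({}_AE)$ for $i\neq 0$, Lemma~\ref{lemmaFormofFX} then shows $H(\cpx{T})$ is concentrated in non-positive degrees with negative terms in $\nust{C}$, so $FG$ has a one-sided tilting complex, and one last application of the composition lemma (using the hypothesis on $\cpx{\bar{T}}$) gives that $FG$ is almost $\nu$-stable. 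Note also that this absorbs the entire positive part of $\cpx{T}$ in a single step, so no induction on width is needed; your inductive framing adds complexity without removing the core difficulty, which is precisely the Hoshino--Kato construction you would need to rediscover.
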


The above theorem tells us that, by checking the terms of tilting
complexes, we can determine whether a derived equivalence is
iterated almost $\nu$-stable or not. If we work with
finite-dimensional algebras over a field, then we have several other
characterizations of iterated almost $\nu$-stable derived
equivalences. For details, see Theorem \ref{TheoremOtherChar} below.
As a consequence of Theorem \ref{Theorem 1}, we have the following
corollary, which provides a new sufficient condition for a derived
equivalence to induce a stable equivalence of Morita type. For
information on stable equivalences of Morita type, we refer to
\cite{Broue1994,LiuXi1,LiuXi2,LiuXi3}.
\begin{Corollary}
    Let $F:\Db{A}\lra\Db{B}$ be a derived equivalence between two
    finite-dimensional algebras $A$ and $B$ over a field. Suppose that
  $\cpx{T}$ and $\cpx{\bar{T}}$ are the tilting complexes associated
  to $F$ and $F^{-1}$, respectively. If $\add(_AT^{\pm})=\add(\nu_AT^{\pm})$ and $\add(_B\bar{T}^{\pm})=\add(\nu_B\bar{T}^{\pm})$, then $A$ and $B$ are
   stably equivalent of Morita type. \label{Corollary 2}
\end{Corollary}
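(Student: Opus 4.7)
The proof of Corollary \ref{Corollary 2} is really just a packaging of Theorem \ref{Theorem 1} together with known closure properties of stable equivalences of Morita type. My plan is to argue in three short steps.

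First, I would observe that a finite-dimensional algebra over a field is in particular an Artin algebra, so Theorem \ref{Theorem 1} applies directly. Under the hypothesis $\add(_AT^{\pm})=\add(\nu_AT^{\pm})$ and $\add(_B\bar T^{\pm})=\add(\nu_B\bar T^{\pm})$, the theorem gives that $F$ is an iterated almost $\nu$-stable derived equivalence. Unwinding the definition recalled in the introduction, I can then fix a decomposition $F\simeq F_1F_2\cdots F_m$ in which each factor $F_i$ is a derived equivalence such that either $F_i$ or $F_i^{-1}$ is almost $\nu$-stable.

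Second, I would invoke the crucial property from \cite{HuXi3}: every almost $\nu$-stable derived equivalence induces a stable equivalence of Morita type. Thus, for each index $i$, whichever of $F_i$ or $F_i^{-1}$ is almost $\nu$-stable induces a stable equivalence of Morita type between the relevant algebras. It is a standard fact (and one of the original motivations for the notion) that stable equivalences of Morita type are closed under taking inverses: if the bimodules $({}_XM_Y,{}_YN_X)$ realize a stable equivalence of Morita type from $X$ to $Y$, then $({}_YN_X,{}_XM_Y)$ realize one in the opposite direction. Consequently, in either case the functor $F_i$ itself induces a stable equivalence of Morita type.

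Finally, I would use that the composition of two stable equivalences of Morita type is again a stable equivalence of Morita type (tensoring the defining bimodules, and using that the decomposition property of $M\otimes N$ containing the regular bimodule as a summand up to a projective bimodule is preserved under composition). Iterating this $m-1$ times along the factorization $F\simeq F_1F_2\cdots F_m$ yields a stable equivalence of Morita type between $A$ and $B$. There is no real obstacle here beyond invoking the right results; the only thing to be slightly careful about is the finite-dimensional-over-a-field hypothesis, which is exactly what is needed to make sense of stable equivalences of Morita type (so that projectivity of the bimodules over the ground field is automatic and the tensor product bimodules behave well), and this is already assumed in the statement of the corollary.
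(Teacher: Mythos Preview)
Your proposal is correct and follows essentially the same route as the paper: the paper simply says that Corollary~\ref{Corollary 2} follows immediately from Theorem~\ref{Theorem 1} together with \cite[Theorem~5.3]{HuXi3}, and your three steps just unpack this one-line argument by making explicit the factorization of $F$ and the closure of stable equivalences of Morita type under inverses and compositions.
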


This paper is organized as follows. In Section 2, we shall recall
some notations and basic facts. Theorem \ref{Theorem 1} will be
proved in Section 3 after several lemmas. Section 4 is devoted to
describing the stable equivalence functor induced by an iterated
almost $\nu$-stable derived equivalence. Finally, in Section 5, we
shall give several methods to construct iterated almost $\nu$-stable
derived equivalences.

\section{Preliminaries}
In this section, we shall recall some basic definitions and facts
needed in our later proofs.

Throughout this paper, unless specified otherwise, all algebras will
be Artin algebras over a fixed commutative Artin ring $R$. All
modules will be finitely generated unitary left modules. For an
algebra $A$, the category of $A$-modules is denoted by $\modcat{A}$;
the full subcategory of $A$-mod consisting of projective modules is
denoted by $\pmodcat{A}$. The stable module category, denoted by
$\stmodcat{A}$, is the quotient category of $\modcat{A}$ modulo the
ideal generated by morphisms factorizing through projective modules.
We denote by $\nu_A$ the usual Nakayama functor.

Let $\cal C$ be an additive category. The composition of two
morphisms $f: X\lra Y$ and $g: Y\lra Z$ in $\cal C$ will be denoted
by $fg$. For two functors $F:\mathcal{C}\rightarrow \mathcal{D}$ and
$G:\mathcal{D}\rightarrow\mathcal{E}$ of categories, their
composition is denoted by $GF$. For an object $X$ in $\mathcal{C}$,
$\add(X)$ is the full subcategory of $\cal C$ consisting of all
direct summands of finite direct sums of copies of $X$.

A complex $\cpx{X}$ over $\cal C$ is a sequence  $\cdots\lra
X^{i-1}\lraf{d_X^{i-1}} X^i\lraf{d_X^i}
X^{i+1}\lraf{d_X^{i+1}}\cdots$ in $\cal C$ such that
$d_X^id_X^{i+1}=0$ for all integers $i$. The category of complexes
over ${\cal C}$ is denoted by $\C{\cal C}$. The homotopy category of
complexes over $\mathcal{C}$ is denoted by $\K{\mathcal C}$. When
$\cal C$ is an abelian category, the derived category of complexes
over $\cal C$ is denoted by $\D{\cal C}$. The full subcategory of
$\K{\cal C}$ and $\D{\cal C}$ consisting of bounded complexes over
$\mathcal{C}$ is denoted by $\Kb{\mathcal C}$ and $\Db{\mathcal C}$,
respectively. As usual, for a given algebra $A$, we simply write
$\Kb{A}$ and $\Db{A}$ for $\Kb{\modcat{A}}$ and $\Db{\modcat{A}}$,
respectively.

It is well-known that, for an algebra $A$, $\Kb{A}$ and $\Db{A}$ are
triangulated categories. For basic results on triangulated
categories, we refer to Happel's book \cite{HappelTriangle}.
Throughout this paper, we use $\cpx{X}[n]$ to denote the complex
obtained by shifting $\cpx{X}$ to the left by $n$ degree.

Let $A$ be an algebra. A homomorphism $f: X\lra Y$ of $A$-modules is
called a \emph{radical map} if, for any module $Z$ and homomorphisms
$h: Z\lra X$ and $g: Y\lra Z$, the composition $hfg$ is not an
isomorphism. A complex over $\modcat{A}$ is called a \emph{radical}
complex if all its differential maps are radical maps. Every complex
over $\modcat{A}$ is isomorphic in the homotopy category $\K{A}$ to
a radical complex. It is easy to see that if two radical complexes
$\cpx{X}$ and $\cpx{Y}$  are isomorphic in $\K{A}$, then $\cpx{X}$
and $\cpx{Y}$ are isomorphic in $\C{A}$.

Two algebras $A$ and $B$ are said to be \emph{derived-equivalent} if
their derived categories $\Db{A}$ and $\Db{B}$ are equivalent as
triangulated categories. In \cite{RickMoritaTh}, Rickard proved that
two algebras are derived-equivalent if and only if there is a
complex $\cpx{T}$ in $\Kb{\pmodcat{A}}$ satisfying

\smallskip
(1) $\Hom(\cpx{T},\cpx{T}[n])=0$ for all  $n\ne 0$, and

(2) $\add(\cpx{T})$ generates $\Kb{\pmodcat{A}}$ as a triangulated
category

\smallskip
{\parindent=0pt such that $B\simeq\End(\cpx{T})$.} A complex in
$\Kb{\pmodcat{A}}$ satisfying the above two conditions is called a
{\em tilting complex} over $A$.
It is known that, given a derived equivalence $F$ between $A$ and
$B$, there is a unique (up to isomorphism) tilting complex $\cpx{T}$
over $A$ such that $F(\cpx{T})\simeq B$. If $\cpx{T}$ is a radical
complex, it is called a tilting complex \emph{associated} to $F$.
Note that, by definition, a tilting complex associated to $F$ is
unique up to isomorphism in $\Cb{A}$.

The following lemma is useful in our later proof. For the
convenience of the reader, we provide  a proof.
\begin{Lemma}
  Let ${\cal C}$ and ${\cal D}$ be two additive categories, and let $F:\Kb{\cal C}\lra\Kb{\cal D}$
  be a triangle functor. Let
   $\cpx{X}$ be a complex in $\Kb{\cal C}$. For each term $X^i$, let
   $\cpx{Y_i}$ be a complex isomorphic to $F(X^i)$. Then $F(\cpx{X})$ is
   isomorphic to a complex $\cpx{Z}$ with
   $Z^m=\bigoplus_{i+j=m}Y_i^j$ for all $m\in\mathbb{Z}$. \label{lemmaFormofFX}
\end{Lemma}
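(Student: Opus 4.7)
The plan is to induct on the length (number of nonzero terms) of $\cpx{X}$, using the brutal truncation at the top term to split off a stalk complex and exploiting that $F$ sends distinguished triangles to distinguished triangles.

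In detail, suppose $\cpx{X}$ has nonzero terms only in degrees between $a$ and $b$. The stalk $X^b$ placed in degree $b$, written $X^b[-b]$, is a subcomplex of $\cpx{X}$ (since $X^{b+1}=0$), with quotient $\sigma^{<b}\cpx{X}$ the brutal truncation below degree $b$. The short exact sequence
$$0\lra X^b[-b]\lra \cpx{X}\lra \sigma^{<b}\cpx{X}\lra 0$$
is split in every degree, so it yields a distinguished triangle in $\Kb{\cal C}$; after rotation and application of $F$ this becomes
$$F(\sigma^{<b}\cpx{X})[-1]\lra F(X^b[-b])\lra F(\cpx{X})\lra F(\sigma^{<b}\cpx{X})$$
in $\Kb{\cal D}$, exhibiting $F(\cpx{X})$ as a mapping cone. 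By induction on $b-a$, the complex $F(\sigma^{<b}\cpx{X})$ is isomorphic to some $\cpx{Z'}$ with $Z'^m=\bigoplus_{i+j=m,\,i<b}Y_i^j$, and since $F$ commutes with shifts we have $F(X^b[-b])\cong \cpx{Y_b}[-b]$. The base case of a stalk complex $\cpx{X}=X^i[-i]$ is immediate from $F(\cpx{X})\cong \cpx{Y_i}[-i]$, whose $m$-th term $Y_i^{m-i}$ agrees with $\bigoplus_{j+k=m}Y_j^k$ since $\cpx{Y_j}=0$ for $j\neq i$.

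Transporting the connecting morphism through these isomorphisms produces a morphism $g:\cpx{Z'}[-1]\lra \cpx{Y_b}[-b]$ in $\Kb{\cal D}$ whose mapping cone is isomorphic to $F(\cpx{X})$. Choosing any chain-map representative of $g$, the cone $\cpx{Z}$ has
$$Z^m\;=\;(\cpx{Z'}[-1])^{m+1}\oplus (\cpx{Y_b}[-b])^m\;=\;Z'^m\oplus Y_b^{m-b}\;=\;\bigoplus_{i+j=m} Y_i^j,$$
with $Y_b^{m-b}$ absorbed as the $i=b$ summand. This closes the induction. I do not foresee any serious obstacle; the only bookkeeping point is that replacing the outer vertices of the triangle by isomorphic complexes still produces a distinguished triangle whose third vertex is isomorphic to $F(\cpx{X})$, which follows from the standard triangulated-category axiom extending a morphism between two triangles with invertible outer components to an isomorphism of triangles.
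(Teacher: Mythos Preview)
Your proof is correct and follows essentially the same approach as the paper: induction on the number of nonzero terms, peeling off a stalk complex via a brutal truncation and identifying $F(\cpx{X})$ with the mapping cone. The only cosmetic difference is that the paper removes the lowest-degree term (using the triangle $X^0[-1]\lra\sigma_{\geq 1}\cpx{X}\lra\cpx{X}\lra X^0$) while you remove the highest-degree term; the computations are otherwise identical.
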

\begin{proof}
  We use induction on the number of non-zero terms of $\cpx{X}$. If
  $\cpx{X}$ has only one non-zero term, then it is obvious. Assume
  that $\cpx{X}$ has more than one non-zero terms. Without loss of
  generality, we suppose that $\cpx{X}$ is the following complex
  $$0\lra X^0\lra X^1\lra\cdots\lra X^n\lra 0$$
  with $X^i\neq 0$ for all $i=0, 1, \cdots, n$. Let $\sigma_{\geq
  1}\cpx{X}$ be the complex $0\lra X^1\lra\cdots\lra X^n\lra 0$.
  Then there is a distinguished triangle in $\Kb{\cal C}$:
   $$X^0[-1]\lra \sigma_{\geq 1}\cpx{X}\lra \cpx{X}\lra X^0.$$
Applying $F$, we get a distinguished triangle in $\Kb{\cal D}$:
  $$F(X^0[-1])\lra F(\sigma_{\geq 1}\cpx{X})\lra F(\cpx{X})\lra
  F(X^0).$$
By induction, $F(\sigma_{\geq 1}\cpx{X})$ is isomorphic to a complex
$\cpx{U}$ with $U^m=\bigoplus\limits_{1\leq i\leq n, i+j=m}Y_i^j$.
Thus, $F(\cpx{X})$ is isomorphic to the mapping cone $\cpx{Z}$ of
the map from $\cpx{Y_0}[-1]$ to $\cpx{U}$. Thus, by definition, we
have
$$Z^{m}=\bigoplus_{0\leq i\leq n, i+j=m} Y_i^j=\bigoplus_{i+j=m}Y_i^j.$$
 This finishes the proof.
\end{proof}

{\it Remark: } Let $F:\Db{A}\lra\Db{B}$ be a derived equivalence
between two algebras $A$ and $B$. $F$ induces an equivalence
$F:\Kb{\pmodcat{A}}\lra\Kb{\pmodcat{B}}$. So, for a bounded complex
of projective $A$-modules, we can use the above lemma to calculate
its image under $F$.

\section{Characterizations of iterated almost $\nu$-stable derived equivalences}

In this section, we shall give a proof of our main result Theorem
\ref{Theorem 1}, which characterizes iterated almost $\nu$-stable
derived equivalences in terms of tilting complexes. In case that the
algebras are finite-dimensional algebras over a field, we shall give
several other characterizations of iterated almost $\nu$-stable
derived equivalences. For this purpose, we need some lemmas.

Let $A$ be an algebra, and let $_AE$ be the direct sum of all those
non-isomorphic indecomposable projective $A$-modules $P$ with
$\nu^i_AP$ being projective-injective for all $i\geq 0$. The
$A$-module $_AE$ is unique up to isomorphism, and is called the {\em
maximal $\nu$-stable } $A$-module. If $_AQ$ is a projective
$A$-module such that $\add(_AQ)=\add(\nu_AQ)$, then clearly
$_AQ\in\add(_AE)$. Throughout this paper, we use $\nust{A}$ to
denote the category $\add(_AE)$.  Recall that for a bounded complex
$\cpx{X}$ over $A$, we use $X^{\pm}$ to denote the $A$-module
$\bigoplus_{i\neq 0}X^i$.
\begin{Lemma}
  Let $\cpx{T}$ be a tilting complex associated to a derived
  equivalence $F:\Db{A}\lra\Db{B}$ between two algebras. Then the following two conditions are equivalent.

  $(1)$ $\add(\nu_AT^{\pm})=\add(_AT^{\pm})$;

  $(2)$ $_AT^{\pm}\in\nust{A}$.\label{equivCond}
\end{Lemma}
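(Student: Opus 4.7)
The implication $(1)\Rightarrow(2)$ is direct. From $\add(\nu_A T^{\pm})=\add({}_AT^{\pm})$, the module $\nu_A T^{\pm}$ is both projective (as a direct summand of $T^{\pm}$) and injective (as the image of a projective under $\nu_A$), hence projective-injective. Iterating preserves this: $\nu_A^i T^{\pm}\in\add(T^{\pm})$ remains projective-injective for every $i\geq 0$, so each indecomposable summand $P$ of $T^{\pm}$ has every $\nu_A^i P$ projective-injective, placing $P\in\add({}_AE)=\nust{A}$. Hence $T^{\pm}\in\nust{A}$.

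For the harder direction $(2)\Rightarrow(1)$, I assume $T^{\pm}\in\nust{A}=\add({}_AE)$. Since $\nu_A$ restricts to an auto-equivalence (indeed a permutation of the finitely many isomorphism classes of indecomposable summands) of $\add({}_AE)$, we already have $\nu_A T^{\pm}\in\add({}_AE)$. What remains is to check that the isomorphism classes of indecomposable summands of $T^{\pm}$ form a $\nu_A$-invariant subset of those of ${}_AE$; equivalently, for each indecomposable summand $P$ of $T^{\pm}$, $\nu_A P$ must appear as a summand of some $T^j$ with $j\neq 0$. The tilting hypothesis enters through the identity $\add({}_AA)=\add(\bigoplus_i T^i)$: each indecomposable projective $A$-module occurs as a summand of some $T^k$, so $\nu_A P$ does occur in some $T^k$; the content of the proof is to rule out $k=0$ as the only possibility.

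For that last step my plan is to use the commutation $F\nu_A\simeq\nu_B F$ together with $F(\cpx{T})=B$, which gives $F(\nu_A\cpx{T})\simeq\nu_B B=DB$, concentrated in degree $0$ of $\Db{B}$. On the other hand, Lemma~\ref{lemmaFormofFX} presents $F(\nu_A\cpx{T})$ as a total complex assembled from the terms $F(\nu_A T^i)$; comparing these two descriptions of the same object in $\Db{B}$ constrains which indecomposables of $\add({}_AE)$ can appear only in $T^0$. The main obstacle is making this comparison clean, since the individual $F(\nu_A T^i)$ are themselves not concentrated in one degree. A probably cleaner route is a direct contradiction using Nakayama duality: if some $\nu_A P$ (with $P$ an indecomposable summand of $T^j\cap\add({}_AE)$ for some $j\neq 0$) were a summand only of $T^0$, then $\Hom_A(\nu_A P,P)\cong D\Hom_A(P,\nu_A^2 P)$ would supply a nonzero morphism $\nu_A P\to P$ extending to a non--null-homotopic chain map $\cpx{T}\to\cpx{T}[j]$, contradicting the tilting Hom-vanishing $\Hom_{\Kb{A}}(\cpx{T},\cpx{T}[j])=0$.
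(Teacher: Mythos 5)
Your direction $(1)\Rightarrow(2)$ is fine and is exactly the paper's (one-line) argument. The problem is $(2)\Rightarrow(1)$: you never complete a proof, and the route you call ``probably cleaner'' rests on a false claim. Nakayama duality does give $\Hom_A(\nu_AP,P)\cong D\Hom_A(P,\nu_A^2P)$, but nothing makes this space nonzero: for a self-injective Nakayama algebra with $n\geq 3$ simples and radical square zero one has $\nu_AP_i\cong P_{i-1}$ and $\Hom_A(P_{i-1},P_i)=0$, so the hypothesis ``$\nu_AP$ occurs only in $T^0$'' hands you no morphism at all. Even granting a nonzero map $\nu_AP\to P$ from a summand of $T^0$ to a summand of $T^j$, it need not be the degree-zero component of any chain map $\cpx{T}\to\cpx{T}[j]$ (compatibility with the differentials is a genuine constraint, not automatic), and if it were, it could simply be null-homotopic; so no contradiction with $\Hom_{\Kb{A}}(\cpx{T},\cpx{T}[j])=0$ is obtained. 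Your first route ($F\nu_A\simeq\nu_BF$, $F(\nu_A\cpx{T})\simeq DB$) points in the right general direction, but the step you yourself flag as ``the main obstacle'' is the entire content of the lemma, and it is left undone.

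For comparison, the paper closes exactly that gap as follows: with $Q_1=\bigoplus_{i<0}T^i$, the method of \cite[Lemma 3.1]{HuXi3} shows that $F^{-1}(B)\simeq\cpx{T}$ is isomorphic in $\Db{A}$ to a complex $\cpx{X}$ with $X^i\in\add(\nu_AQ_1)$ for all $i<0$; one then truncates both $\cpx{T}$ and $\cpx{X}$ at $\im d^{-1}$, checks that $\pi_T\colon T^{-1}\to\im d_T^{-1}$ is still a radical map (using that $T^{-1}$ is injective under hypothesis $(2)$), and applies \cite[Lemma 2.2]{HuXi3} to upgrade the induced quasi-isomorphism between these truncations, whose terms are injective, to an isomorphism in $\Kb{A}$. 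This forces each $T^i$ with $i<0$ to be a direct summand of $X^i$, hence $Q_1\in\add(\nu_AQ_1)$, and the counting argument (which you do have) gives $\add(Q_1)=\add(\nu_AQ_1)$; the positive-degree part is handled symmetrically. None of this comparison machinery---the representation of $F^{-1}(B)$ with prescribed terms, the radical-map claim, the homotopy-category identification of complexes of injectives---appears in your proposal, so the hard implication remains unproved. (Your auxiliary observations, e.g.\ that $\nu_A$ permutes the indecomposables of $\nust{A}$ and that every indecomposable projective occurs among the terms of a tilting complex, are correct but do not substitute for this step.)
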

\begin{proof}
  Clearly, we have $(1)\Rightarrow (2)$. It remains
  to show that $(2)$ implies $(1)$. Now we assume $(2)$ holds. Let
  $Q_1=\bigoplus_{i<0}T^i$.  Using the same method in the proof of \cite[Lemma 3.1]{HuXi3},
  $F^{-1}(B)$ is isomorphic in $\Db{A}$ to a complex $\cpx{X}$
  with $X^i\in\add(\nu_AQ_1)$ for all $i<0$. Thus,
  $\cpx{T}\simeq\cpx{X}$, and there is a quasi-isomorphism $\cpx{f}:
  \cpx{T}\lra\cpx{X}$, which induces a quasi-isomorphism
  $$\xymatrix@C=10mm@R=7mm{
    \cpx{U}: \quad \cdots \ar[r] & T^{-2}\ar[d]^{f^{-2}}\ar[r]^{d_T^{-2}} &
    T^{-1}\ar[d]^{f^{-1}}\ar[r]^{\pi_T}& \im d_T^{-1}\ar[d]^{f^0|_{\im d_T^{-1}}}\ar[r] & 0\\
    \cpx{V}: \quad \cdots \ar[r] & X^{-2}\ar[r]^{d_X^{-2}} &
    X^{-1}\ar[r]^{\pi_X}& \im d_X^{-1}\ar[r] & 0.\\
   }$$
We claim that the canonical epimorphism $\pi_T: T^{-1}\lra \im
d_T^{-1}$ is still a radical map. Otherwise, let $h: Y\lra T^{-1}$
and $g: \im d_T^{-1}\lra Y$ be such that $h\pi_Tg=1_Y$. Then $Y$ is
isomorphic to a direct summand of $T^{-1}$, and therefore $Y$ is an
injective module. Thus, $g$ factors through the inclusion $\lambda:
\im d_T^{-1}\lra T^0$, say $g=\lambda u$. Consequently
$1_Y=h\pi_T\lambda u=hd_T^{-1}u$. This means that $d_T^{-1}:
T^{-1}\lra T^0$ is not radical which is a contradiction. Since $T^i$
and $X^i$ are injective for all $i<0$, by \cite[Lemma 2.2]{HuXi3},
$\cpx{U}$ and $\cpx{V}$ are isomorphic in $\Kb{A}$. Thus, $T^{i}$ is
a direct summand of $X^{i}$ for all $i<0$, and consequently
$Q_1=\bigoplus_{i<0}T^i\in\add(\nu_AQ_1)$. Since $Q_1$ and
  $\nu_AQ_1$ have the same number of non-isomorphic indecomposable
  direct summands, we have $\add(_AQ_1)=\add(\nu_AQ_1)$. Let
$Q_2:=\bigoplus_{i>
  0}T^i$. Similarly, we have $\add(_AQ_2)=\add(\nu_AQ_2)$. Consequently, $\add(_AT^{\pm})=\add(_AQ_1\oplus {}_AQ_2)=\add(\nu_AQ_1\oplus\nu_AQ_2)=\add(\nu_AT^{\pm})$.  Hence $(2)\Rightarrow
  (1)$.
\end{proof}

In the following, we shall  use Lemma \ref{equivCond} freely. For
instance, in the definition of an almost $\nu$-stable equivalence,
the condition
$\add(\bigoplus_{i=1}^nT^{-i})=\add(\bigoplus_{i=1}^n\nu_AT^{-i})$
is equivalent to say that $T^{-i}\in\nust{A}$ for all
$i=1,\cdots,n$.

\begin{Lemma}
   Let $F: \Db{A}\lra\Db{B}$ be a derived equivalence between two
 algebras $A$ and $B$, and let $\cpx{T}$ and $\cpx{\bar{T}}$ be
   the tilting complexes associated to $F$ and $F^{-1}$,
   respectively. If $\add(_AT^{\pm})=\add(\nu_AT^{\pm})$
   and
   $\add(_B\bar{T}^{\pm})=\add(\nu_B\bar{T}^{\pm})$,
   then $F$ induces an equivalence between $\Kb{\nust{A}}$ and
   $\Kb{\nust{B}}$. \label{lemmaEtoE}
\end{Lemma}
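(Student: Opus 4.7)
The strategy is to prove the two inclusions $F(\Kb{\nust{A}})\subseteq\Kb{\nust{B}}$ and $F^{-1}(\Kb{\nust{B}})\subseteq\Kb{\nust{A}}$, which together with the fact that $F$ is already a triangle equivalence on the ambient derived categories will yield the lemma. By Lemma~\ref{lemmaFormofFX} applied to the restriction $F\colon\Kb{\pmodcat{A}}\to\Kb{\pmodcat{B}}$, the terms of the image of any complex in $\Kb{\nust{A}}$ are assembled from the terms of the complexes $F(P)$ with $P$ an indecomposable summand of the maximal $\nu$-stable module ${}_AE$. Hence it suffices to prove that for every indecomposable $P\in\nust{A}$, the radical form $\cpx{Y}$ of $F(P)$ satisfies $Y^i\in\nust{B}$ for all $i\in\mathbb{Z}$.

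For the non-zero-degree terms this is essentially immediate. Since $P$ is a direct summand of ${}_AA$, the complex $F(P)$ is a direct summand of $F({}_AA)=\cpx{\bar{T}}$; taking radical forms gives $Y^i\in\add(\bar{T}^i)$ for every $i$. For $i\neq 0$, the hypothesis $\add({}_B\bar{T}^{\pm})=\add(\nu_B\bar{T}^{\pm})$, combined with Lemma~\ref{equivCond}, gives $\bar{T}^i\in\nust{B}$, hence $Y^i\in\nust{B}$.

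The delicate point is the degree-zero term $Y^0$, which a priori is only a summand of $\bar{T}^0$, and $\bar{T}^0$ may contain summands outside $\nust{B}$. To pin down $Y^0$ I would use the standard fact that the derived Nakayama functor is (a shift of) the Serre functor on perfect complexes and is therefore preserved by any triangle equivalence; on projective modules this specialises to a natural isomorphism $F\nu_A\simeq\nu_B F$ in $\Db{B}$. Since $\nu_A^{j}P\in\nust{A}$ for every $j\geq 0$, the argument of the previous paragraph applied to $\nu_A^{j}P$ shows that the degree-zero term of $F(\nu_A^{j}P)$ is a summand of $\bar{T}^0$, and in particular projective. Comparing this projective radical form of $F(\nu_A^{j}P)$ with the injective complex $\nu_B^{j}\cpx{Y}$---both representing the same object of $\Db{B}$---via the radical-map and injective-coresolution technique used in the proof of Lemma~\ref{equivCond} (cf.\ also \cite[Lemma 2.2]{HuXi3}), one concludes that $\nu_B^{j}Y^0$ is projective-injective for every $j\geq 0$, i.e.\ $Y^0\in\nust{B}$.

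The symmetric argument, with $F^{-1}$ in place of $F$ and the hypothesis on $\cpx{\bar{T}}$ in place of that on $\cpx{T}$, yields the reverse inclusion. The chief obstacle is the degree-zero step above: one has to detect which summand of $\bar{T}^0$ is occupied by $Y^0$ using only information about the $\nu_A$-orbit of $P$, and this forces a careful comparison of two genuinely different radical representatives---one projective, one injective---of the same object of $\Db{B}$.
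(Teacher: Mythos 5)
Your overall strategy (reduce via Lemma \ref{lemmaFormofFX} to the images of the indecomposable modules $P\in\nust{A}$, read off the non-zero-degree terms from $\cpx{\bar{T}}$, and pin down the degree-zero term by comparing the projective radical form with a $\nu_B$-twisted injective representative, using $F\nu_A\simeq\nu_BF$) is the same core idea as the paper's proof, and your comparisons for $j\geq 1$ can indeed be made rigorous, either by splitting the acyclic cone of a chain map $\cpx{Z_j}\to\nu_B^{j}\cpx{Y}$ or via the $\Hom_{\Kb{B}}\simeq\Hom_{\Db{B}}$ lemma together with radicality. But there is a genuine gap at exactly the point you flag as delicate: $Y^0\in\nust{B}$ requires $\nu_B^{j}Y^0$ to be projective-injective for \emph{all} $j\geq 0$, including $j=0$, i.e.\ $Y^0$ itself must be injective. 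Your comparison at $j=0$ is the tautologous one of $\cpx{Y}$ with itself and yields nothing, while the comparisons for $j\geq 1$ only show that $\nu_B^{j}Y^0$ is projective-injective for $j\geq 1$; since $\nu_B^{-1}$ need not carry projective-injective modules to injective ones, injectivity of $Y^0$ does not follow from what you wrote. The missing ingredient is that $\nu_A$ permutes the indecomposable summands of the maximal $\nu$-stable module, i.e.\ $\nu_A({}_AE)\cong {}_AE$: then every indecomposable $P\in\nust{A}$ is of the form $\nu_AP'$ with $P'\in\nust{A}$, and your $j=1$ comparison applied to $P'$ exhibits $Y^0$, up to summands in $\nust{B}$, as a direct summand of $\nu_B({Y'}^0)\oplus(\mbox{proj-inj})$, hence injective. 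A secondary point you should make explicit: for $j\geq 2$ the termwise complex $\nu_B^{j}\cpx{Y}$ represents $F(\nu_A^{j}P)$ in $\Db{B}$ only once one knows the lower terms $\nu_B^{i}Y^0$, $i<j$, are projective, so your argument must be organized as an induction on $j$.

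This is also where the paper's proof is slicker: it works with the whole module ${}_AE$ at once. Since $\nu_AE\cong E$, one has $\nu_BF(E)\cong F(E)$ in $\Db{B}$, so a single chain map $\eta:\cpx{\bar{T}_1}\to\nu_B\cpx{\bar{T}_1}$ with acyclic cone compares the complex with its own Nakayama twist; splitting the cone gives $\nu_B\bar{T}_1^0\oplus\bar{Q}_1\cong\bar{T}_1^0\oplus\bar{Q}_2$ with $\bar{Q}_1,\bar{Q}_2\in\nust{B}$, which simultaneously yields injectivity of $\bar{T}_1^0$ (your $j=0$ case) and, by iterating inside $\add(\bar{T}_1^0\oplus{}_B\bar{E})$ at the module level, projective-injectivity of all $\nu_B^{i}\bar{T}_1^0$, with no need ever to apply $\nu_B$ to a complex more than once. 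If you add the observation $\nu_AE\cong E$ (or simply switch to comparing $F(E)$ with $\nu_BF(E)$), your argument closes up and becomes essentially the paper's.
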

\begin{proof} Let $_AE$ (respectively, $_B\bar{E}$) be the maximal
$\nu$-stable $A$-module (respectively, $B$-module).  Then by
definition, we have $\nust{A}=\add(_AE)$ and
$\nust{B}=\add(_B\bar{E})$.  The complex
 $F(_AE)$ is isomorphic to a complex $\cpx{\bar{T}_1}$ in
 $\add(\cpx{\bar{T}})$. Since $\nu_AE\simeq {}_AE$, we have
 $\nu_B\cpx{\bar{T}_1}\simeq\cpx{\bar{T}_1}$ in $\Db{B}$.  Hence there
 is a chain map $\eta$ from $\cpx{\bar{T}_1}$ to
 $\nu_B\cpx{\bar{T}_1}$ such that the mapping cone $\con(\eta)$ is
 acyclic. By our assumption, all
 $\bar{T}_1^i$ and $\nu_B\bar{T}_1^i$ with $i\neq 0$ are projective-injective since they are all in $\nust{B}$.
 Hence $\con(\eta)$ splits, and therefore
 $\nu_B\bar{T}_1^0\oplus \bar{Q}_1\simeq\bar{T}_1^0\oplus \bar{Q}_2$ for some $\bar{Q}_1,
 \bar{Q}_2\in\nust{B}$. Hence,
 $\nu_B\bar{T}_1^0\in\add(\bar{T}_1^0\oplus {}_B\bar{E})$. It follows that
 $\nu_B^i\bar{T}_1^0\in\add(\bar{T}_1^0\oplus {}_B\bar{E})$ is projective-injective for all $i\geq 0$.
 Hence $\bar{T}_1^0\in\nust{B}$, and consequently
 $\cpx{\bar{T}_1}$ is in $\Kb{\nust{B}}$. Similarly, we can
 show that $F^{-1}(_B\bar{E})$ is isomorphic to a complex in
 $\Kb{\nust{A}}$ and the lemma is proved.
\end{proof}
The following lemma is useful in the proof of Theorem \ref{Theorem
1}.
\begin{Lemma}
  Let $F:\Db{A}\lra\Db{B}$ and $G:\Db{B}\lra\Db{C}$ be derived
  equivalences, and let $\cpx{P}, \cpx{\bar{P}}, \cpx{Q}$,
  $\cpx{\bar{Q}}$, $\cpx{T}$, and $\cpx{\bar{T}}$ be the tilting complexes associated to $F$,
  $F^{-1}, G$, $G^{-1}$, $GF$, and $F^{-1}G^{-1}$ respectively.
  If the following hold:

    $(1)$ $_AP^{\pm}\in\nust{A}$ and $_B\bar{P}^{\pm}\in\nust{B}$;

    $(2)$ $_BQ^{\pm}\in\nust{B}$ and $_C\bar{Q}^{\pm}\in\nust{C}$,

{\parindent=0pt then } we have $_AT^{\pm}\in\nust{A}$ and
$_C\bar{T}^{\pm}\in\nust{C}$. \label{necessaryCond}
\end{Lemma}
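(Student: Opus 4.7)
The plan is to exploit the identity $\cpx{T}\simeq F^{-1}(\cpx{Q})$ in $\Db{A}$, which follows from $GF(\cpx{T})\simeq C\simeq G(\cpx{Q})$, and to compute the right-hand side term by term via Lemma \ref{lemmaFormofFX}. Concretely, for each term $Q^i$ I would choose a complex $\cpx{W_i}$ in $\Kb{\pmodcat{A}}$ isomorphic to $F^{-1}(Q^i)$; then $\cpx{T}$ is isomorphic in $\Kb{\pmodcat{A}}$ to the assembled complex $\cpx{Z}$ with $Z^m=\bigoplus_{i+j=m}W_i^j$. The aim is to arrange the $\cpx{W_i}$ so that each $Z^m$ with $m\neq 0$ lies in $\nust{A}$, and then to transfer this property to $\cpx{T}$ by passing from $\cpx{Z}$ to its radical model.

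The terms of $\cpx{Q}$ split into two cases. For $i\neq 0$, hypothesis (2) gives $Q^i\in\nust{B}$, and Lemma \ref{lemmaEtoE} (applicable by hypothesis (1)) guarantees that $F^{-1}(Q^i)$ is isomorphic in $\Kb{\pmodcat{A}}$ to a complex $\cpx{W_i}$ whose every term lies in $\nust{A}$. For $i=0$, the term $Q^0$ is only a projective $B$-module, hence a direct summand of $B^n$ for some $n$; since $F^{-1}(B)\simeq\cpx{P}$, the object $F^{-1}(Q^0)$ is isomorphic in $\Kb{\pmodcat{A}}$ to a direct summand of $\cpx{P}^n$. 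Taking radical models of the two summands in a splitting $\cpx{P}^n\simeq\cpx{W_0}\oplus\cpx{W_0'}$ and using that two radical complexes isomorphic in $\Kb{\pmodcat{A}}$ are already isomorphic in $\Cb{A}$, one can choose $\cpx{W_0}$ radical with $W_0^j\in\add(P^j)$ for every $j$; by hypothesis (1), $P^j\in\nust{A}$ whenever $j\neq 0$, so $W_0^j\in\nust{A}$ in that range as well.

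Combining both cases, $Z^m=W_0^m\oplus\bigoplus_{i\neq 0}W_i^{m-i}$ lies in $\nust{A}$ for every $m\neq 0$. The complex $\cpx{Z}$ need not be radical, but $\cpx{T}$ is radical and isomorphic to $\cpx{Z}$ in $\Kb{\pmodcat{A}}$, so $\cpx{T}$ appears as the radical direct summand of $\cpx{Z}$ in $\Cb{A}$ and each $T^m$ is a direct summand of $Z^m$; closure of $\nust{A}=\add(_AE)$ under direct summands then yields $_AT^{\pm}\in\nust{A}$. The condition $_C\bar{T}^{\pm}\in\nust{C}$ is obtained by the symmetric argument, starting from $\cpx{\bar{T}}\simeq G(\cpx{\bar{P}})$ in $\Db{C}$, applying Lemma \ref{lemmaEtoE} to $G$ on the terms $\bar{P}^i$ with $i\neq 0$, and using $G(B)\simeq\cpx{\bar{Q}}$ together with hypothesis (2) on $\bar{Q}^{\pm}$ to handle the degree-zero term $\bar{P}^0$. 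The delicate step is precisely the treatment of $Q^0$ and $\bar{P}^0$: they are not controlled by any $\nu$-stability hypothesis, and one has to route control through the nonzero-degree terms of $\cpx{P}$ and $\cpx{\bar{Q}}$ respectively, which is where hypotheses (1) and (2) re-enter.
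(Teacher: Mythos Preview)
Your proof is correct and follows essentially the same approach as the paper's. The paper argues the $_C\bar{T}^{\pm}\in\nust{C}$ half explicitly via $\cpx{\bar{T}}\simeq G(\cpx{\bar{P}})$ and invokes symmetry for the other, whereas you argue the $_AT^{\pm}\in\nust{A}$ half explicitly via $\cpx{T}\simeq F^{-1}(\cpx{Q})$; in both cases Lemma~\ref{lemmaEtoE} handles the terms in nonzero degree and the observation that $F^{-1}(Q^0)\in\add(\cpx{P})$ (respectively $G(\bar{P}^0)\in\add(\cpx{\bar{Q}})$) handles degree zero, with the passage to the radical model at the end being identical.
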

\begin{proof}
   We only need to show that $\bar{T}^{\pm}\in\nust{C}$, the other
   statement follows by symmetry. By definition, $\cpx{\bar{T}}$ is
   isomorphic to $GF(A)\simeq G(\cpx{\bar{P}})$. Since
   $\bar{P}^i\in\nust{B}$ for all $i\neq 0$, by Lemma
   \ref{lemmaEtoE}, $G(\bar{P}^i)$ is isomorphic to a complex
   $\cpx{Y_i}$ in $\Kb{\nust{C}}$ for all $i\neq 0$.
   For $i=0$, the
   complex $G(\bar{P}^0)$ is isomorphic to a complex $\cpx{Y_0}$ in
   $\add(\cpx{\bar{Q}})$. By Lemma \ref{lemmaFormofFX}, the complex
   $G(\cpx{\bar{P}})$ is isomorphic to a complex $\cpx{Z}$ with
   $Z^m=\bigoplus_{i+j=m}Y_i^j$. Since all $Y_i^j$, except $Y_0^0$,
   are in $\nust{C}$, we have $Z^{\pm}\in\nust{C}$. Note that both
   $\cpx{\bar{T}}$ and $\cpx{Z}$ are in $\Kb{\pmodcat{C}}$. The
   complexes $\cpx{\bar{T}}$ and $\cpx{Z}$ are isomorphic in
   $\Kb{\pmodcat{C}}$. Furthermore,  since the complex $\cpx{\bar{T}}$ is a
   radical complex, it follows that $\bar{T}^i$ is a direct summand
   of $Z^i$ for integers $i$, and consequently $\bar{T}^{\pm}\in\nust{C}$.
\end{proof}

Finally, we have the following lemma which is crucial in the proof
of our main result.

\begin{Lemma}
  Let $F:\Db{A}\lra\Db{B}$ be a derived equivalence between two Artin algebras $A$ and $B$, and let
  $\cpx{T}$ be the associated tilting complex of $F$. If
  $_AT^{\pm}\in\nust{A}$, then there is an almost
  $\nu$-stable equivalence $G:\Db{C}\lra\Db{A}$ such that associated
  tilting complex $\cpx{P}$ of $FG$ satisfies that $P^i\in\nust{C}$
  for all $i<0$ and $P^i=0$ for all $i>0$. \label{sufficientCond}
\end{Lemma}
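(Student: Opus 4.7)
The plan is to exhibit $G$ by constructing its associated tilting complex over $A$. By definition, together with the symmetric relationship between tilting complexes of $G$ and $G^{-1}$ recalled after \cite[Lemma 2.1]{HuXi3}, giving an almost $\nu$-stable derived equivalence $G:\Db{C}\to\Db{A}$ amounts to exhibiting a tilting complex $\cpx{\bar{Q}}$ over $A$ of the form $0\to\bar{Q}^0\to\bar{Q}^1\to\cdots\to\bar{Q}^n\to 0$ with $\bar{Q}^i\in\nust{A}$ for all $i\geq 1$; one then sets $C:=\End_{\Db{A}}(\cpx{\bar{Q}})\opp$ and lets $G$ be the quasi-inverse of the Rickard equivalence $H:\Db{A}\to\Db{C}$ determined by $H(\cpx{\bar{Q}})\simeq C$.

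I would build $\cpx{\bar{Q}}$ directly from the positive part of $\cpx{T}$. The natural attempt is to take $\bar{Q}^i:=T^i$ with $d^i_{\bar{Q}}:=d^i_T$ for $i=1,\ldots,n$ (so $\bar{Q}^i\in\nust{A}$ automatically by hypothesis), and to take $\bar{Q}^0$ containing $A$ as a direct summand so that $\add\cpx{\bar{Q}}$ generates $\Kb{\pmodcat{A}}$. The remaining data---the precise choice of $\bar{Q}^0$ and of the connecting differential $d^0:\bar{Q}^0\to T^1$---must be made in an Okuyama--Rickard fashion (for instance, with $d^0$ a suitable left $\add(T^1)$-approximation, perhaps with $\bar{Q}^0$ augmented by further summands from $\nust{A}$) so that $\cpx{\bar{Q}}$ becomes self-orthogonal in $\Kb{A}$. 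Self-orthogonality of $\cpx{\bar{Q}}$ should then be deduced from the self-orthogonality of $\cpx{T}$, transported via the brutal-truncation short exact sequence
$$0\to\sigma_{>0}\cpx{T}\to\cpx{T}\to\sigma_{\leq 0}\cpx{T}\to 0,$$
together with the projective-injectivity of the terms lying in $\nust{A}$.

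Granting such a $\cpx{\bar{Q}}$, the tilting complex $\cpx{P}$ of $FG$ is isomorphic to $G^{-1}(\cpx{T})\simeq H(\cpx{T})$, and I would verify its shape via Lemma \ref{lemmaFormofFX}: $H(\cpx{T})\simeq\cpx{Z}$ with $Z^m=\bigoplus_{i+j=m}Y_i^j$ where $\cpx{Y_i}\simeq H(T^i)$. An analogue of Lemma \ref{lemmaEtoE} applied to the (now almost $\nu$-stable) equivalence $H$ yields $H(T^i)\in\Kb{\nust{C}}$ for all $i\neq 0$, and a degree analysis exploiting that $\cpx{\bar{Q}}$ lives in degrees $[0,n]$ should force $H(T^i)$ for $i\geq 1$ to be concentrated in degrees $\leq -i$ and $H(T^i)$ for $i\leq 0$ to sit in degrees bounded above by $-i$, so that all contributions to $\cpx{Z}$ land in non-positive positions and are $\nust{C}$-terms in the strictly negative ones. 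The main obstacle is constructing $\cpx{\bar{Q}}$ to be simultaneously self-orthogonal and yield the correct degree cancellation; the self-orthogonality step is the technically heaviest, and will require a subtle interplay between the self-orthogonality of $\cpx{T}$ and the projective-injective structure of $\nust{A}$.
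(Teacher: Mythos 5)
Your high-level strategy is the same as the paper's: manufacture an auxiliary tilting complex over $A$ concentrated in degrees $[0,n]$ with positive-degree terms in $\nust{A}$, let $C$ be its endomorphism ring, let $G$ be the quasi-inverse of the resulting equivalence $H:\Db{A}\to\Db{C}$, check via Lemma~\ref{equivCond} that $G$ is almost $\nu$-stable, and then compute $H(\cpx{T})$ with Lemma~\ref{lemmaFormofFX}. But your concrete proposal has two genuine gaps. First, taking $\bar{Q}^i:=T^i$ with $d^i_{\bar{Q}}:=d^i_T$ for $i\geq 1$ and trying to glue on a degree-zero term is not how the paper proceeds, and it is not clear this can be made self-orthogonal: the brutal truncation $\sigma_{>0}\cpx{T}$ is unrelated to $A$ by any approximation property, and the self-orthogonality of $\cpx{T}$ does not transfer to it. You acknowledge this is the hard part, but the paper sidesteps it entirely by citing a dual of Hoshino--Kato \cite[Proposition~3.2]{IdemsTilting}, which produces a fresh tilting complex $\cpx{Q}:=\cpx{R}\oplus {}_AE[-m]$ with $\cpx{R}:0\to A\to R^1\to\cdots\to R^m\to 0$, $R^i\in\nust{A}$, built by iterated approximations and having nothing to do with the $T^i$.

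Second, and more importantly, your tilting complex is missing the shifted stalk summand ${}_AE[-m]$. This summand is essential: because ${}_AE[-m]$ is a direct summand of the tilting complex $\cpx{Q}$ and $H(\cpx{Q})\simeq C$, one gets $H({}_AE)\simeq {}_CP[m]$, a pure shift of a module. Consequently $H(T^i)\simeq P_i[m]$ for each $T^i\in\nust{A}$ ($i\neq 0$), i.e.\ a complex concentrated in the single degree $-m$, and the degree bookkeeping in Lemma~\ref{lemmaFormofFX} then cleanly yields $Z^t=0$ for $t>0$ and $Z^t\in\nust{C}$ for $t<0$. Without ${}_AE[-m]$ as a summand, Lemma~\ref{lemmaEtoE} only tells you $H(T^i)\in\Kb{\nust{C}}$ spread over degrees $[-n,0]$; your claim that $H(T^i)$ for $i\geq 1$ is concentrated in degrees $\leq -i$ has no justification, and the vanishing of $Z^t$ for $t>0$ does not follow. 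The extra summand is not a decoration --- it is the mechanism that makes the degree cancellation work.
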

\begin{proof}
Let $_AE$ be the maximal $\nu$-stable $A$-module. Then
$\nust{A}=\add(_AE)$.  Suppose $m$ is the maximal integer such that
$T^m\neq 0$. By a dual statement of \cite[Proposition
3.2]{IdemsTilting}, there is a tilting complex
$\cpx{Q}:=\cpx{R}\oplus {}_AE[-m]$ over $A$, where $\cpx{R}$ is of
the form: $\cpx{R}: 0\lra A\lra R^1\lra \cdots\lra R^m\lra 0$ with
$R^i\in\nust{A}$ for all $i>0$. Let $C$ be the endomorphism algebra
of $\cpx{Q}$, and let $H: \Db{A}\lra\Db{C}$ be a derived equivalence
given by the tilting complex $\cpx{Q}$. It is easy to see that
$H(_AE)\simeq {}_CP[m]$ for some ${}_CP\in\nust{C}$, and $H(A)$ is
isomorphic to a complex $\cpx{S}$: $0\lra S^{-m}\lra\cdots\lra
S^{-1}\lra S^0\lra 0$ with $S^i\in\nust{C}$ for all $i<0$. Let $G$
is a quasi-inverse of $H$. Then $\cpx{S}$ is a tilting complex
associated to $G$. By Lemma \ref{equivCond}, we see that $G$ is
almost $\nu$-stable.

 Now let $\cpx{Y_i}:=H(T^i)$ for each integer $i$. Since $T^{\pm}\in\nust{A}$,
for each integer $i\neq 0$, we have $\cpx{Y_i}\simeq P_i[m]$ for
some $P_i\in\nust{C}$. Moreover, $\cpx{Y_i}=0$ for all $i>m$ since
$T^i=0$ for all $i>m$. The complex $\cpx{Y_0}$ has the property that
$Y_0^i=0$ for all $i>0$ and $Y_0^i\in\nust{C}$ for all $i<0$. By
Lemma \ref{lemmaFormofFX}, the complex $H(\cpx{T})$ is isomorphic to
a complex $\cpx{Z}$ with $Z^t=\bigoplus_{i+j=t}Y_i^j$. It follows
that $Z^t=0$ for all $t>0$ and $Z^t\in\nust{C}$ for all $t<0$. Since
$FG(H(\cpx{T}))\simeq F(\cpx{T})\simeq B\simeq FG(\cpx{P})$ in
$\Db{B}$, the complex $\cpx{Z}$ is isomorphic in $\Db{C}$ to the
tilting complex $\cpx{P}$ associated to $FG$. Since both $\cpx{Z}$
and $\cpx{P}$ are in $\Kb{\pmodcat{C}}$, they are isomorphic in
$\Kb{\pmodcat{C}}$. Since $\cpx{P}$ is a radical complex, the term
$P^i$ is a direct summand of $Z^i$ for all $i$, and consequently
$\cpx{P}$ has the desired property.
\end{proof}
We are now in the position to give a proof of our main result.
\begin{proof}[{\bf Proof\, of\,  Theorem \ref{Theorem 1}}]
   Assume that $F$ is an iterated almost $\nu$-stable derived
   equivalence. Let $F\simeq F_1F_2\cdots F_m$ be a composition such that
   $F_i$ or $F_i^{-1}$ is an almost $\nu$-stable derived
   equivalence. Then by Lemma \ref{necessaryCond}, we have
   $\add(_AT^{\pm})=\add(\nu_AT^{\pm})$ and
   $\add(_B\bar{T}^{\pm})=\add(\nu_B\bar{T}^{\pm})$. Conversely,
   assume that $\add(_AT^{\pm})=\add(\nu_AT^{\pm})$ and
   $\add(_B\bar{T}^{\pm})=\add(\nu_B\bar{T}^{\pm})$. By Lemma
   \ref{sufficientCond}, there is an almost $\nu$-stable derived
   equivalence $G: \Db{C}\lra\Db{A}$ such that the
   tilting complex $\cpx{P}$ associated to $FG$ has the property that $P^i=0$ for all
   $i>0$ and $P^i\in\nust{C}$ for all $i<0$. By Lemma
   \ref{equivCond}, we have
   $\add(\bigoplus_{i<0}P^i)=\add(\bigoplus_{i<0}\nu_CP^i)$. Let $\cpx{\bar{P}}$ be
   the tilting complex associated to $G^{-1}F^{-1}$. By Lemma
   \ref{necessaryCond}, we have
   $\add(_B\bar{P}^{\pm})=\add(\nu_B\bar{P}^{\pm})$. Since $P^i=0$ for
   all $i>0$, by \cite[Lemma 2.1]{HuXi3}, we have $\bar{P}^i=0$ for
   all $i<0$. Hence $FG$ is an almost $\nu$-stable derived
   equivalence. Thus, $F\simeq (FG) G^{-1}$ is an iterated almost
   $\nu$-stable derived equivalence.
\end{proof}
  {\it Remark:} (1) Theorem \ref{Theorem 1} gives us a method to
determine whether a derived equivalence is iterated almost
$\nu$-stable or not by checking the terms of the involved tilting
complexes.

(2) Let $P$ be a projective $A$-module. The condition
$\add(_AP)=\add(\nu_AP)$ is equivalent to say that $P$ is
projective-injective and $\add(\top(P))=\add(\soc(P))$.

(3) The proof of Corollary \ref{Corollary 2} follows immediately
from Theorem \ref{Theorem 1} and \cite[Theorem 5.3]{LiuXi3}. But
Corollary \ref{Corollary 2} generalizes \cite[Theorem 5.3]{LiuXi3}.
This gives a new sufficient condition for a derived equivalence to
induce a stable equivalence of Morita type.

\medskip
As an application of Theorem \ref{Theorem 1}, we have the following
corollary.
\begin{Corollary}
 Let $F:\Db{A}\lra \Db{B}$ be a derived equivalence between two
Artin algebras $A$ and $B$, and let $\cpx{T}$ and $\cpx{\bar{T}}$ be
the tilting complexes associated to $F$ and $F^{-1}$, respectively.
If $\add(_AT^{\pm})=\add(\nu_AT^{\pm})$ and
$\add(_B\bar{T}^{\pm})=\add(\nu_B\bar{T}^{\pm})$, then the following
hold:

$(1)$ $\findim(A)=\findim(B)$, and $\gldim(A)=\gldim(B)$;

$(2)$ $\repdim(A)=\repdim(B)$;

$(3)$ $\domdim(A)=\domdim(B)$,

{\parindent=0pt where} $\findim, \gldim, \repdim$ and $\domdim$
stand for finitistic dimension, global dimension, representation
dimension and dominant dimension, respectively.
\end{Corollary}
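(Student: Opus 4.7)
The plan is to invoke Theorem \ref{Theorem 1} to reduce the corollary to the known invariance of the listed homological dimensions under stable equivalences of Morita type. The hypotheses $\add(_AT^{\pm})=\add(\nu_AT^{\pm})$ and $\add(_B\bar{T}^{\pm})=\add(\nu_B\bar{T}^{\pm})$ are exactly the condition of Theorem \ref{Theorem 1}, so $F$ is an iterated almost $\nu$-stable derived equivalence. Thus $F\simeq F_1F_2\cdots F_m$ with each $F_i$ or $F_i^{-1}$ an almost $\nu$-stable derived equivalence.

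By the main result of \cite{HuXi3}, every almost $\nu$-stable derived equivalence (and also its quasi-inverse, taken into consideration the corresponding form of the tilting complex on the other side) induces a stable equivalence of Morita type between the two algebras involved. Since the composition of two stable equivalences of Morita type is again a stable equivalence of Morita type, it follows that $F$ induces a stable equivalence of Morita type between $A$ and $B$. This is the only new derived-equivalence ingredient needed; the rest is pure invariance theory.

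Having produced a stable equivalence of Morita type between $A$ and $B$, each of (1), (2), (3) is now a quotation from the literature: the invariance of the finitistic dimension and of the global dimension under stable equivalences of Morita type is due to Liu--Xi (see \cite{LiuXi1,LiuXi2,LiuXi3}); the invariance of the representation dimension is proved by Xi; and the invariance of the dominant dimension is likewise known (e.g.\ via Mart\'{\i}nez-Villa's work on stable invariants for self-injective summands and its extensions). None of these steps requires any new calculation.

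The main (indeed only) obstacle is conceptual rather than technical: one must recognize that the hypotheses in the corollary exactly match those of Theorem \ref{Theorem 1}, whose proof has already been completed, and that the upshot of Theorem \ref{Theorem 1} together with \cite{HuXi3} is precisely the existence of a stable equivalence of Morita type, which is the bridge from derived-equivalence data to the invariance statements (1)--(3). Once this bridge is crossed, the proof is a one-line appeal to the cited invariance theorems.
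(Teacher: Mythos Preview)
Your proposal is correct and follows essentially the same route as the paper: apply Theorem~\ref{Theorem 1} to conclude that $F$ is iterated almost $\nu$-stable, then invoke \cite{HuXi3} to obtain the dimension equalities. The paper simply cites \cite[Corollary 1.2]{HuXi3} directly (which already packages the invariance of $\findim$, $\gldim$, $\repdim$, $\domdim$ under almost $\nu$-stable derived equivalences), whereas you unpack this into the intermediate step of a stable equivalence of Morita type plus separate invariance results; the logic is the same.
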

\begin{proof}
The corollary follows from \cite[Corollary 1.2]{HuXi3} and Theorem
\ref{Theorem 1}.
\end{proof}
Now we work with finite-dimensional algebras over a field. In this
case, we get several other characterizations of iterated almost
$\nu$-stable derived equivalences, which is the following theorem.

\begin{Theorem}
   Let $F:\Db{A}\lra\Db{B}$ be a derived equivalence between two
   finite-dimensional basic algebras $A$ and $B$ over a field, and let
   $\cpx{T}$ and $\cpx{\bar{T}}$ be the tilting complexes associated
   to $F$ and $F^{-1}$, respectively. Then the following are
   equivalent:

 $(1)$ The functor $F$ is an iterated almost $\nu$-stable derived
   equivalence.

 $(2)$ $\add(\nu_AT^{\pm})=\add({}_AT^{\pm})$ and $\add(\nu_B\bar{T}^{\pm})=\add({}_B\bar{T}^{\pm})$.

 $(3)$ $T^{\pm}\in\nust{A}$ and $\bar{T}^{\pm}\in\nust{B}$.

 $(4)$ For each indecomposable projective $A$-module $P\not\in\nust{A}$,
   the image $F(\top (P))$ is isomorphic in $\Db{B}$ to a simple
   $B$-module.

 $(5)$ For each indecomposable projective $A$-module $P\not\in\nust{A}$, the
   following conditions are satisfied:

   {\parindent=30pt $(a)$ $P\not\in\add({}_AT^{\pm})$; }

   {\parindent=30pt $(b)$ the multiplicity of $P$ as a direct summand of $_AT^0$ is $1$.}
   \label{TheoremOtherChar}
\end{Theorem}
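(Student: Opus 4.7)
The equivalences $(1)\Leftrightarrow(2)\Leftrightarrow(3)$ are already in hand: $(1)\Leftrightarrow(2)$ is Theorem \ref{Theorem 1}, and $(2)\Leftrightarrow(3)$ is Lemma \ref{equivCond} applied on each side. The remaining content is the cycle $(3)\Rightarrow(4)\Rightarrow(5)\Rightarrow(3)$.

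The central computational ingredient for $(3)\Rightarrow(4)$ and $(4)\Rightarrow(5)$ is the following. Since $\cpx{T}$ is a radical complex, every differential of $\HomP_A(\cpx{T},\top P)$ vanishes, because a radical map composed with a projection onto a simple module is zero. Hence the cohomology of that complex equals its terms: in degree $n$ it is $\Hom_A(T^{-n},\top P)\simeq\mu(P,T^{-n})\,D_P$, where $\mu(P,T^{-n})$ denotes the multiplicity of $P$ as a direct summand of $T^{-n}$ and $D_P=\End_A(\top P)$. By the derived equivalence, this cohomology also equals $\Hom_{\Db{B}}(B,F(\top P)[n])=H^n(F(\top P))$. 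Thus $H^n(F(\top P))\simeq\mu(P,T^{-n})\,D_P$ for every $n\in\mathbb{Z}$, an identity that drives both implications.

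For $(3)\Rightarrow(4)$, the hypothesis $T^{\pm}\in\nust{A}$ forces $\mu(P,T^i)=0$ for $i\neq 0$, so $F(\top P)$ has cohomology concentrated in degree $0$ and is isomorphic in $\Db{B}$ to a single module $M$ with $\End_B(M)=\End_A(\top P)=D_P$, a division ring; hence $M$ is indecomposable. Upgrading $M$ to a simple module uses Lemma \ref{sufficientCond} to reduce to an almost $\nu$-stable $FG$ and the structural results of \cite{HuXi3}: the stable equivalence of Morita type induced by $F$ sends $\top P$ to a simple $B$-module, and combined with Krull--Schmidt in the stable category this forces $M$ itself to be simple. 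For $(4)\Rightarrow(5)$, the hypothesis gives $H^n(F(\top P))=H^n(S')=\delta_{n,0}\,D_{S'}$, so comparison with the identity above yields $\mu(P,T^i)=0$ for $i\neq 0$ (hence (5)(a)) together with $\mu(P,T^0)\,D_P\simeq D_{S'}$; since $F(\top P)=S'$ identifies $D_P$ with $D_{S'}$, we conclude $\mu(P,T^0)=1$, which is (5)(b).

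For $(5)\Rightarrow(3)$, condition (a) immediately gives $T^{\pm}\in\nust{A}$. For the remaining $\bar{T}^{\pm}\in\nust{B}$, apply Lemma \ref{sufficientCond} to construct $G$ almost $\nu$-stable so that the tilting complex $\cpx{P}$ of $FG$ has the prescribed non-positive shape, and then use (5)(b) to verify that the associated tilting complex $\cpx{\bar{P}}$ of $(FG)^{-1}$ also satisfies the $B$-side almost $\nu$-stable condition -- this mirrors the end of the proof of Theorem \ref{Theorem 1} but uses the multiplicity hypothesis (5)(b) in place of the a priori assumption $\bar{T}^{\pm}\in\nust{B}$. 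Then $F\simeq (FG)G^{-1}$ is iterated almost $\nu$-stable, giving (1) and therefore (3). The main delicate point is the upgrade in $(3)\Rightarrow(4)$ from ``indecomposable with division-ring endomorphism ring'' to ``simple'', which genuinely uses the structural results of \cite{HuXi3}.
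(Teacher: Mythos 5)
Your reduction $(1)\Leftrightarrow(2)\Leftrightarrow(3)$, the computation $H^n(F(\top P))\simeq\Hom_A(T^{-n},\top P)$ (valid because $\cpx{T}$ is radical and $\top P$ semisimple), and the step $(4)\Rightarrow(5)$ all agree with the paper's argument. The first genuine gap is in $(3)\Rightarrow(4)$: your argument only yields that $F(\top P)$ is an indecomposable module $M$ with $\End_B(M)$ a division ring, and the upgrade to ``$M$ is simple'' -- which is the entire content of $(4)$ -- is justified by asserting that the stable equivalence of Morita type induced by $F$ sends $\top P$ to a simple $B$-module, attributed to unspecified ``structural results of \cite{HuXi3}''. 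No such statement is available there, and no generic property of stable equivalences of Morita type can supply it, since such equivalences do not preserve simple modules in general; the assertion is essentially $(4)$ itself. Moreover, even granting a version of it for almost $\nu$-stable equivalences, your reduction $F\simeq (FG)G^{-1}$ via Lemma \ref{sufficientCond} would still require knowing that $G^{-1}(\top P)$ is again the top of an indecomposable projective not in $\nust{C}$ and that $FG$ sends it to a simple module -- i.e.\ the statement being proved, for each factor. The paper instead proves $(3)\Rightarrow(4)$ directly, and the $B$-side half of $(3)$ is used essentially: $\bar{T}^{\pm}\in\nust{B}$ gives $\Hom_B({}_B\bar{E},X)=0$, and a proper submodule/quotient of $X$ is transported by $F^{-1}$ to a nontrivial short exact sequence $0\to U\to\top P\to V\to 0$, contradicting the simplicity of $\top P$. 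Your sketch never engages the $B$-side hypothesis at all, which is a symptom of the missing argument.

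The second gap is in $(5)\Rightarrow(3)$. Condition $(a)$ does give $T^{\pm}\in\nust{A}$, but for $\bar{T}^{\pm}\in\nust{B}$ you propose to ``mirror the end of the proof of Theorem \ref{Theorem 1} using (5)(b) in place of the a priori assumption''. That step cannot be mirrored: the relevant move in Theorem \ref{Theorem 1} is Lemma \ref{necessaryCond}, which consumes the $B$-side hypothesis $\add({}_B\bar{T}^{\pm})=\add(\nu_B\bar{T}^{\pm})$, and in the present situation no $B$-side information is available; you do not explain how the multiplicity-one condition alone is to produce it. This is exactly where the paper's proof of $(4)\Rightarrow(3)$ does real work: write ${}_AA\simeq{}_AE\oplus{}_AQ$, observe that $F(\top Q)$ is semisimple with projective cover $\bar{Q}$, decompose ${}_BB\simeq\bar{Q}\oplus W$, show $\bar{T}^{\pm}\in\add({}_BW)$ by a Hom computation, and then prove ${}_BW\in\nust{B}$ by the rigidity argument of Al-Nofayee--Rickard (showing $\nu_B^iW$ is projective for all $i\geq 0$, whence $W\simeq\nu_B^kW$ and $W$ is projective-injective) -- precisely the step the paper's remark singles out as the obstruction to extending the theorem to general Artin algebras. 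Without an argument of this kind, your cycle $(3)\Rightarrow(4)\Rightarrow(5)\Rightarrow(3)$ does not close: both genuinely hard implications are asserted rather than proved.
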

\begin{proof}
  It follows from Theorem \ref{Theorem 1} and Lemma \ref{equivCond}
  that the statements $(1)$, $(2)$ and $(3)$ are equivalent. Note
  that for any simple module $S$ over a basic algebra $\Lambda$, the dimension
  of $S$ as an $\End_{\Lambda}(S)$-space is $1$. In this proof, let $_AE$ and $_B\bar{E}$ be the maximal
  $\nu$-stable $A$-module and $B$-module, respectively.

  $(4)\Rightarrow (5)$  For each indecomposable projective
  $A$-module $P$ not in $\nust{A}$, since $F(\top(P))$ is isomorphic
  in $\Db{B}$ to a simple $B$-module, we have $\Hom_{\Db{A}}(\cpx{T},
  \top(P)[i])=0$ for all $i\neq 0$, and $$\Hom_{\Db{A}}(\cpx{T},
  \top(P))\simeq \Hom_B(B, F(\top(P)))\simeq F(\top(P))$$ is one-dimensional over the division ring
  $\End_A(\top(P))$. Note that $\cpx{T}$ is a radical complex. It
  follows that $P$ is not a direct summand of $T^{\pm}$ and the
  multiplicity of $P$ as a direct summand of $T^0$ is $1$.

  $(5)\Rightarrow (4)$  By condition $(a)$, we see that $\Hom_{\Db{A}}(\cpx{T},
  \top(P)[i])=0$ for all $i\neq 0$. Hence $F(\top(P))$ is isomorphic
  to an indecomposable $B$-module $X$. By condition $(b)$, we can assume that
  $\cpx{T_P}$ is the only indecomposable direct summand of $\cpx{T}$
  such that $P$ is a direct summand of its degree zero term. Suppose that $\bar{P}$ is the indecomposable projective $B$-module
  corresponding to the direct summand $\cpx{T_P}$. Then
  $$\Hom_B(B, X)\simeq \Hom_{\Db{A}}(\cpx{T}, \top(P))\simeq \Hom_{\Db{A}}(\cpx{T_P}, \top(P))\simeq \Hom_B(\bar{P}, X).$$
  This implies that $X$ only contains $\top(\bar{P})$ as composition
  factors. If $X$ is not a simple $B$-module, then there is a nonzero map
  $X\lra \soc(X)\lra X$ in $\End_B(X)$ which
  is not an isomorphism. This contradicts to the fact that $\End_B(X)\simeq\End_A(\top(P))$ is a
  division ring. Hence $X\simeq F(\top(P))$ is a simple $B$-module.

  $(3)\Rightarrow (4)$ By
  definition, we have $\add({}_AE)=\nust{A}$ and
  $\add({}_B\bar{E})=\nust{B}$. Let $P$ be an indecomposable
  projective $A$-module not in $\nust{A}$. Then it is clear that $\Hom_{\Db{A}}(\cpx{T},
  \top(P)[i])=0$ for all $i\neq 0$ since $T^{\pm}\in\nust{A}$, and consequently $F(\top(P))$ is
  isomorphic in $\Db{B}$ to a $B$-module $X$. By Lemma
  \ref{lemmaEtoE}, the complex $F^{-1}({}_B\bar{E})$ is isomorphic
  in $\Db{A}$ to a complex $\cpx{E}$ in $\Kb{\nust{A}}$. Hence
  $$\Hom_B({}_B\bar{E}, X)\simeq \Hom_{\Db{A}}(F^{-1}({}_B\bar{E}),
  \top(P))\simeq \Hom_{\Kb{A}}(\cpx{E}, \top(P))=0.$$ If $_BX$ is not
  simple, then there is a short exact sequence $0\lra \bar{U}\lra X\lra \bar{V}\lra
  0$ in $B$-module with $\bar{U}, \bar{V}$ non-zero. Applying $\Hom_B({}_B\bar{E},
  -)$, we get that $\Hom_B(_B\bar{E}, \bar{U})=0=\Hom_B({}_B\bar{E}, \bar{V})$,
  and consequently $\Hom_{\Db{B}}(\cpx{\bar{T}}, \bar{U}[i])=0=\Hom_{\Db{B}}(\cpx{\bar{T}},
  \bar{V}[i])$ for all $i\neq 0$ since $\bar{T}^{\pm}\in\nust{B}$. Hence $F^{-1}(\bar{U})$ and
  $F^{-1}(\bar{V})$ are isomorphic to $A$-modules $U$ and $V$,
  respectively. Thus, we get a distinguished triangle $$U\lra \top(P)\lra V\lra
  U[1]$$  in $\Db{A}$ by applying $F^{-1}$ to the distinguished
  triangle $\bar{U}\lra X\lra \bar{V}\lra \bar{U}[1]$. Applying $\Hom_{\Db{A}}(A,
  -)$ to the above triangle, we get an exact sequence $0\lra U\lra \top(P)\lra V\lra
  0$ with non-zero $A$-modules $U$ and $V$. This contradicts to the
  fact that $\top(P)$ is a simple $A$-module. Hence $F(\top(P))\simeq X$ is a simple
  $B$-module.

  $(4)\Rightarrow (3)$ For each indecomposable projective $A$-module
  $P$ not in $\nust{A}$, since $F(\top(P))$ is isomorphic $\Db{B}$ to a simple $B$-module, we have
  $\Hom_{\Db{A}}(\cpx{T},\top(P)[i])=0$ for all $i\neq 0$. Together
  with the isomorphism
  $$\Hom_{\Db{A}}(\cpx{T}, \top(P)[i])\simeq \Hom_{\Kb{A}}(\cpx{T}, \top(P)[i])\simeq \Hom_A(T^i,
  \top(P)), $$
  we get $\Hom_{A}(T^i, \top(P))=0$ for all $i\neq 0$ and for all
  indecomposable projective $A$-module $P$ not in $\nust{A}$. Hence
  $T^{i}\in\nust{A}$ for all $i\neq 0$, that is,
  $T^{\pm}\in\nust{A}$. Now let $_AQ$ be a projective $A$-module
  such that $_AA\simeq {}_AE\oplus {}_AQ$. It follows by assumption that
  $F(\top(Q))$ is a semi-simple $B$-module. Suppose that $\bar{Q}$
  is a projective cover of $F(\top(Q))$, and suppose that $_BB\simeq \bar{Q}\oplus
  W$. Since $\cpx{\bar{T}}$ is a radical complex in $\pmodcat{B}$,
  we have
  $$\begin{array}{rl}
  \Hom_B(\bar{T}^i, \top(\bar{Q})) & \simeq \Hom_{\Kb{B}}(\cpx{\bar{T}},
  \top(\bar{Q})[i])\\
  & \simeq \Hom_{\Db{B}}(\cpx{\bar{T}},
  \top(\bar{Q})[i])\\
  &\simeq \Hom_{\Db{A}}(A, \top(Q)[i])=0\end{array}$$
  for all $i\neq 0$. Hence $\bar{T}^{\pm}\in\add(_BW)$. It remains
  to show $_BW\in\nust{B}$. Note that $\Hom_B({}_BW, \top(\bar{Q})[i])=0$ for all integers $i$.
  It follows that $\Hom_{\Db{A}}(F^{-1}(_BW), \top({Q})[i])=0$ for
  all integers $i$. Let $\cpx{L}$ be a radical complex in
  $\Kb{\pmodcat{A}}$ such that $F^{-1}(_BW)\simeq \cpx{{L}}$. Then $\Hom_A({L}^i, \top({Q}))\simeq \Hom_{\Db{A}}(\cpx{{L}},
  \top({Q})[i])=0$ for all integers $i$. Hence ${L}^i\in\add(_AE)$
  for all integers $i$. Using the same proof as the proof of
  \cite[Theorem 2.1]{AlRickard}, we can show that $\nu_B^iW$ is
  a projective $B$-module for all $i\geq 0$. It follows that $_BW\simeq
  \nu_B^kW$ for some $k>0$. Hence $_BW$ is projective-injective and
  $\nu_B^iW$ is projective-injective for $i>0$, and consequently
  $_BW\in\nust{B}$. This finishes the proof.
\end{proof}
{\it Remark:} (1) By Theorem \ref{TheoremOtherChar} (5), we see that
if we consider finite-dimensional algebras over a field, then we can
determine whether a derived equivalence $F$ is iterated almost
$\nu$-stable or not by checking the terms of the tilting complex
associated to $F$, and we do not need to check the terms of the
tilting complex associated to $F^{-1}$, which is needed in Theorem
\ref{Theorem 1}.

(2) It is interesting to know whether Theorem \ref{TheoremOtherChar}
holds for general Artin algebras. Note that the only problem is the
step ``$(4)\Rightarrow (3)$", where the method in the proof of
\cite[Theorem 2.1]{AlRickard} does not work for general Artin
algebras.

\medskip
As a consequence, we have the following corollary.
\begin{Corollary}
   Let $F:\Db{A}\lra \Db{B}$ be a derived equivalence between two
   finite-dimensional basic algebras over a field. If one of the
   equivalent conditions in {\rm Theorem \ref{TheoremOtherChar}} is
   satisfied, then the algebras $A$ and $B$ are stably equivalent of
   Morita type. \label{CorollaryDeriveStable}
\end{Corollary}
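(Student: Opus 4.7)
The plan is to chain two earlier results and be done. The hypothesis says that one of the five equivalent conditions of Theorem \ref{TheoremOtherChar} holds for $F$; in particular condition $(2)$ holds, namely
$\add({}_AT^{\pm})=\add(\nu_AT^{\pm})$ and $\add({}_B\bar{T}^{\pm})=\add(\nu_B\bar{T}^{\pm})$. But this is exactly the hypothesis of Corollary \ref{Corollary 2}, whose conclusion is that $A$ and $B$ are stably equivalent of Morita type. Since the present corollary only differs by specializing to basic finite-dimensional algebras over a field (a setting that is a fortiori covered by Corollary \ref{Corollary 2}), the conclusion transfers directly.

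Alternatively, one can argue more conceptually via condition $(1)$: if $F$ is an iterated almost $\nu$-stable derived equivalence, then by definition $F\simeq F_1F_2\cdots F_m$ in which each $F_i$ or $F_i^{-1}$ is almost $\nu$-stable. By the main result of \cite{HuXi3} every almost $\nu$-stable derived equivalence induces a stable equivalence of Morita type, and since being stably equivalent of Morita type is a symmetric relation (swap the two defining bimodules) and is closed under composition (tensor the two bimodule pairs over the intermediate algebra), we may splice the stable equivalences produced at each stage into a single stable equivalence of Morita type between $A$ and $B$.

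There is no substantial obstacle here: the corollary is essentially a packaging result. The real content sits in Theorem \ref{TheoremOtherChar} (which shows that the several alternative conditions all single out the iterated almost $\nu$-stable case) and in the transfer of almost $\nu$-stable derived equivalences to stable equivalences of Morita type already established in \cite{HuXi3}. The main thing to be careful about in writing the proof is simply to point at whichever of the equivalent hypotheses one prefers and then cite the appropriate earlier result, rather than reproducing either argument.
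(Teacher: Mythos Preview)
Your proposal is correct and matches the paper's own proof, which is the one-liner ``This follows from Theorem \ref{TheoremOtherChar}, and \cite[Theorem 5.3]{HuXi3}.'' Your second route (use condition $(1)$ and compose the stable equivalences of Morita type coming from each $F_i$) is exactly what that citation unpacks to, and your first route via Corollary \ref{Corollary 2} is an equally valid repackaging of the same chain of implications.
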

\begin{proof}
  This follows from Theorem \ref{TheoremOtherChar}, and \cite[Theorem
  5.3]{HuXi3}.
\end{proof}

We end this section by using a simple example to illustrate Theorem
\ref{TheoremOtherChar} and Corollary \ref{CorollaryDeriveStable}.

\smallskip
 {\parindent=0pt\bf Example}: Let $k$ be a field, and let $A$ and $B$ be
finite-dimensional $k$-algebras given by quivers with relations in
Fig. 1 and Fig. 2, respectively.
\begin{center}
\begin{tabular}{ccc}
\multirow{3}{*}{$\xymatrix@R=2mm@C=2mm{
 \bullet\ar[rr]^{\alpha}^(0){1}^(1){2} && \bullet\ar[ldd]^{\beta}\\
      &&\\
      &\bullet\ar[luu]^(.15){3}^{\gamma} &
}$} & \hspace{1cm} & $\xymatrix{
  \bullet\ar@<2.5pt>[r]^{\alpha} &
  \bullet\ar@<2.5pt>[l]^(1){1}^(0){2}^{\beta}\ar@<2.5pt>[r]^{\gamma}
  &\bullet\ar@<2.5pt>[l]^(0){3}^{\delta}
}$\\
&&\\
&& $\alpha\gamma=\delta\beta=0$\\
$\alpha\beta\gamma=\beta\gamma\alpha\beta=\gamma\alpha\beta\gamma=0$
&&
$\alpha\beta=\delta\gamma\delta=\beta\alpha-\gamma\delta=0.$\\
{\footnotesize Fig. 1 } && {\footnotesize Fig. 2 }\\
\end{tabular}
\end{center}
Let $P(i)$ denote the indecomposable projective $A$-module
corresponding to the vertex $i$. Then there is a tilting complex of
$A$-modules
$$\cpx{T}: \quad 0 \lra P(2)\oplus P(2)\oplus P(3)
\lraf{[f,0,0]^T} P(1)\lra 0$$ with $P(1)$ in degree zero. One can
check that $\End_{\Kb{\pmodcat{A}}}(\cpx{T})$ is isomorphic to $B$,
and that $\nust{A}=\add(P(2)\oplus P(3))$. Hence the tilting complex
satisfies the condition (5) in Theorem \ref{TheoremOtherChar}.
Therefore, the complex $\cpx{T}$ induces an iterated almost
$\nu$-stable derived equivalence (actually even an almost
$\nu$-stable derived equivalence) between $A$ and $B$. By Corollary
\ref{CorollaryDeriveStable}, the algebras $A$ and $B$ are stably
equivalent of Morita type.

\section{The stable equivalence functor}
In this section, we will give a description of the stable
equivalence functor induced by an iterated almost $\nu$-stable
derived equivalence.

Let $A$ be an Artin algebra, and let $_AE$ be a maximal $\nu$-stable
$A$-module. Then by definition $\nust{A}=\add(_AE)$.  We use
$\nustcat{A}$
 to denote the quotient category of $\modcat{A}$ modulo morphisms
 factorizing through modules in $\nust{A}$. The Hom-space in
 $\nustcat{A}$ is denoted by $\nustHom_A(-, -)$. For a morphism $f$
 in $\modcat{A}$, its image in $\nustcat{A}$ under the canonical
 functor from $\modcat{A}$ to $\nustcat{A}$ is denoted by
 $\nustmor{f}$.
  The category $\Kb{\nust{A}}$ is a clearly thick subcategory (that is, a
triangulated full subcategory closed under taking direct summands)
of $\Db{A}$. Let $\Db{A}/\Kb{\nust{A}}$ be the Verdier quotient
category, then we have a canonical additive functor
$$\Sigma': \modcat{A}\lra \Db{A}/\Kb{\nust{A}}$$
obtained by composing the natural embedding from $\modcat{A}$ to
$\Db{A}$ and the quotient functor from
$\Db{A}\lra\Db{A}/\Kb{\nust{A}}$. For the definition and basic
properties of Verdier quotient, we refer to \cite[Chapter
2]{neumann}. Since $\Sigma'(_AE)$ is clearly isomorphic to zero
object in $\Db{A}/\Kb{\nust{A}}$, the functor $\Sigma'$ induces an
additive functor
$$\Sigma: \nustcat{A}\lra \Db{A}/\Kb{\nust{A}}.$$

Keeping this notation, we have a proposition, which can be viewed as
a generalization of a well-known result of Rickard \cite[Theorem
2.1]{RickDstable}
\begin{Proposition}
   The functor $$\Sigma: \nustcat{A}\lra\Db{A}/\Kb{\nust{A}}$$ is
   fully faithful. Moreover, the functor $\Sigma$ is an equivalence if
   and only if $A$ is self-injective. \label{PropositionEmbedding}
\end{Proposition}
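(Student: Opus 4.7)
The plan is to prove fully faithfulness of $\Sigma$ by adapting Rickard's argument \cite[Theorem 2.1]{RickDstable}, exploiting that modules in $\nust{A}$ are simultaneously projective and injective; the equivalence criterion will then be deduced from Rickard's theorem. Throughout I set $\mathcal{S}:=\Kb{\nust{A}}$. The key initial observation I would record is that for any $\cpx{C}\in\mathcal{S}$ and any $A$-module $M$, the canonical maps $\Hom_{\Kb{A}}(M,\cpx{C})\to\Hom_{\Db{A}}(M,\cpx{C})$ and $\Hom_{\Kb{A}}(\cpx{C},M)\to\Hom_{\Db{A}}(\cpx{C},M)$ are isomorphisms (by injectivity, respectively projectivity, of the terms of $\cpx{C}$), so morphisms in $\Db{A}$ between modules and objects of $\mathcal{S}$ reduce to chain maps modulo chain homotopy.

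For faithfulness: if $\underline{f}:X\to Y$ satisfies $\Sigma(\underline{f})=0$, then $f$ factors in $\Db{A}$ through some $\cpx{C}\in\mathcal{S}$ as $X\xrightarrow{h}\cpx{C}\xrightarrow{g}Y$. By the observation above, $h$ is represented by a map $h^0:X\to C^0$ with $d_C^0 h^0=0$ and $g$ by $g^0:C^0\to Y$ with $g^0 d_C^{-1}=0$ (both up to homotopy, which contributes only maps factoring through terms in $\nust{A}$). Thus $f=g^0 h^0$ factors through $C^0\in\nust{A}$ in $\modcat{A}$, so $\underline{f}=0$ in $\nustcat{A}$.

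For fullness: given $\alpha\in\Hom_{\Db{A}/\mathcal{S}}(X,Y)$ represented by a roof $X\xrightarrow{u}\cpx{Z}\xleftarrow{t}Y$ with $\mathrm{cone}(t)=\cpx{C}\in\mathcal{S}$ and the associated triangle $\cpx{C}[-1]\xrightarrow{\partial}Y\xrightarrow{t}\cpx{Z}\to\cpx{C}$, I would induct on the length of $\cpx{C}$, using the octahedral axiom applied to the stupid truncation of $\cpx{C}$ to reduce to the stalk case $\cpx{C}=E[-n]$ with $E\in\nust{A}$. In the stalk case, applying $\Hom_{\Db{A}}(X,-)$ to $E[-n-1]\to Y\to\cpx{Z}\to E[-n]$ gives a long exact sequence controlled by $\Hom_{\Db{A}}(X,E[-n])=\Ext_A^{-n}(X,E)$; this vanishes for $n\neq 0$ (negative Ext if $n>0$; injectivity of $E$ if $n<0$), so $u$ lifts to a module homomorphism $f:X\to Y$ with $tf=u$, yielding $(u,t)\sim(f,1_Y)$. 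For $n=0$ the connecting map $\partial\in\Ext_A^1(E,Y)$ vanishes by projectivity of $E$, so the triangle splits as $\cpx{Z}\simeq Y\oplus E$; composing the roof with the projection $\cpx{Z}\to Y$ (whose cone $E[1]$ is in $\mathcal{S}$) then produces an equivalent roof of the desired form.

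For the equivalence criterion: if $A$ is self-injective then every projective is injective, so $\nust{A}=\pmodcat{A}$, giving $\nustcat{A}=\stmodcat{A}$ and $\mathcal{S}=\Kb{\pmodcat{A}}$; thus $\Sigma$ coincides with Rickard's functor, which is an equivalence by \cite[Theorem 2.1]{RickDstable}. Conversely, if $\Sigma$ is essentially surjective, composition with the canonical Verdier quotient $Q:\Db{A}/\Kb{\nust{A}}\to\Db{A}/\Kb{\pmodcat{A}}$ (essentially surjective since $Q$ is itself a quotient functor) yields an essentially surjective functor $\nustcat{A}\to\Db{A}/\Kb{\pmodcat{A}}$ factoring as $\nustcat{A}\to\stmodcat{A}\to\Db{A}/\Kb{\pmodcat{A}}$; as the first arrow is the identity on objects, Rickard's functor must be essentially surjective, and hence $A$ is self-injective by Rickard's theorem. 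The main obstacle I foresee is controlling the inductive reduction in fullness: ensuring the octahedral construction always yields an equivalent roof with strictly shorter cone requires a careful choice of which term of $\cpx{C}$ to peel off, especially when $\cpx{C}$ has a degree-zero term together with terms in other degrees.
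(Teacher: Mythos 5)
Your faithfulness argument and the ``if'' half of the equivalence criterion are fine, but the converse direction contains a genuine error. You reduce to the big quotient $\Db{A}/\Kb{\pmodcat{A}}$ and argue: if $\Sigma$ is dense then the canonical functor $\stmodcat{A}\lra\Db{A}/\Kb{\pmodcat{A}}$ is dense, ``hence $A$ is self-injective by Rickard's theorem.'' Rickard's theorem \cite[Theorem 2.1]{RickDstable} is only an implication (self-injective $\Rightarrow$ equivalence); it asserts no converse, and the converse you invoke is false: for any non-self-injective algebra of finite global dimension (e.g.\ the path algebra of $A_2$) one has $\Db{A}=\Kb{\pmodcat{A}}$, so $\Db{A}/\Kb{\pmodcat{A}}=0$ and the functor from $\stmodcat{A}$ is trivially dense. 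Passing to the quotient by $\Kb{\pmodcat{A}}$ destroys exactly the information you need; the obstruction to density lives in $\Db{A}/\Kb{\nust{A}}$ itself. The paper argues there directly: if $A$ is not self-injective, pick a projective $P\not\in\nust{A}$; density would give a module $X$ with $X\simeq P[-1]$ in $\Db{A}/\Kb{\nust{A}}$, and an octahedral-axiom manipulation of the defining roof shows that $P\oplus X$ sits in degree zero of a complex isomorphic in $\Db{A}$ to an object of $\Kb{\nust{A}}$, forcing $P$ into $\nust{A}$ (up to summands), a contradiction. (Note $\Hom_{\Db{A}}(P[-1],X)\simeq\Ext^1_A(P,X)=0$, so the relevant cone is essentially $P\oplus X$.) Some argument of this kind, carried out before quotienting further, is indispensable.

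The fullness proof also has an unclosed step, which you yourself flag. Your stalk computations (the cases $\mathrm{cone}(t)=E[-n]$, split for $n=0$ by projectivity, lifting for $n\neq 0$ by the vanishing of $\Hom_{\Db{A}}(X,E[-n])$) are correct, but the induction does not run as stated: after peeling one term off $\cpx{C}$ via a stupid-truncation octahedron, you factor $t$ as $Y\lra\cpx{Z}'\lra\cpx{Z}$, and the resulting fraction $X\to\cpx{Z}'$ is a morphism into a genuine complex, not a module, so the inductive hypothesis (which concerns morphisms between modules) no longer applies; it is not clear how to convert it back into a roof between $X$ and $Y$ with strictly shorter cone. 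The paper avoids induction entirely: with the cone $\cpx{E}\in\Kb{\nust{A}}$ of the denominator, it uses the single truncation triangle $\sigma_{\geq 0}\cpx{E}\lra\cpx{E}\lra\sigma_{<0}\cpx{E}$ together with the two vanishings $\Hom_{\Db{A}}(X,\sigma_{<0}\cpx{E})=0$ (injective terms in negative degrees) and $\Hom_{\Db{A}}((\sigma_{\geq 0}\cpx{E})[-1],Y)=0$ (projective terms in positive degrees) to replace the roof by one with denominator cone $\sigma_{\geq 0}\cpx{E}$ and then to produce the module map $f$ in one stroke. Reorganizing your argument along these lines (handle the whole negative and non-negative parts of the cone at once, rather than stalk by stalk) would close the gap.
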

\begin{proof}
   A morphism $\cpx{X}\lra \cpx{Y}$ in $\Db{A}/\Kb{\nust{A}}$ is
   denoted by a fraction $s^{-1}a:
   \cpx{X}\llaf{s}\cpx{Z}\lraf{a}\cpx{Y}$, where $a$ and $s$ are
   morphisms in $\Db{A}$, and if $\cpx{Z}\rlaf{s}\cpx{X}\lra
   \cpx{U}\lra\cpx{Z}[1]$ is a distinguished triangle in $\Db{A}$,
   then $\cpx{U}\in\Kb{\nust{A}}$. A morphism $s'$ in $\Db{A}$ with this
   property will be denoted by $\rlaf{s'}$. Two morphisms
   $\cpx{X}\llaf{s}\cpx{U}\lraf{a}\cpx{Y}$ and
   $\cpx{X}\llaf{r}\cpx{V}\lraf{b}\cpx{Y}$ are equal if and only if
   there are morphisms $\cpx{W}\rlaf{t}\cpx{U}$ and
   $\cpx{W}\rlaf{h}\cpx{V}$ such that $ts=hr$ and $ta=hb$. An
   isomorphism from $X$ to $Y$ is of the form
   $X\llaf{s}\cpx{U}\rlaf{t}Y$.

   First, we show that $\Sigma$ is a full functor. For this purpose, it
   suffices to show that $\Sigma'$ is a full functor. Now let $f: X\lra
   Y$ be a morphism in $\modcat{A}$. Then $\Sigma'(f)$ is the morphism
   $X\llaf{1_X}X\lraf{f}Y$. We need to show that each morphism from
   $X$ to $Y$ in $\Db{A}/\Kb{\nust{A}}$ is of this form. Let
   $X\llaf{s}\cpx{U}\lraf{a}Y$ be a morphism in
   $\Db{A}/\Kb{\nust{A}}$. By definition, there is a distinguished triangle
   $\cpx{U}\lraf{s} X\lraf{g} \cpx{E}\lra \cpx{U}[1]$ in $\Db{A}$ with
   $\cpx{E}\in\Kb{\nust{A}}$. Consider the distinguished triangle
   in $\Db{A}$
   $$\sigma_{\geq
   0}\cpx{E}\lraf{\alpha}\cpx{E}\lraf{\beta}\sigma_{<0}\cpx{E}\lra (\sigma_{\geq 0}\cpx{E})[1].$$
Since $\cpx{E}$ is clearly in $\Kb{\imodcat{A}}$, we have
$\Hom_{\Db{A}}(X, \sigma_{<0}\cpx{E})\simeq \Hom_{\Kb{A}}(X,
\sigma_{<0}\cpx{E})=0$. It follows that $g\beta=0$, and therefore
$g$ factorizes through $\alpha$. Hence we can form the following
commutative diagram in $\Db{A}$ with rows being distinguished
triangles.
$$\xymatrix{
\cpx{V}\ar@{=>}[r]^h\ar@{=>}[d]^{r} & X\ar[r]\ar@{=}[d] &
\sigma_{\geq 0}\cpx{E}\ar[r]^{w}\ar[d]^{\alpha}
&\cpx{V}[1]\ar@{=>}[d]^{r[1]}\\
 \cpx{U}\ar@{=>}[r]^s & X\ar[r]^{g} & \cpx{E}\ar[r]
&\cpx{U}[1]. }$$ Since $\Hom_{\Db{A}}((\sigma_{\geq 0}\cpx{E})[-1],
Y)\simeq \Hom_{\Kb{A}}((\sigma_{\geq 0}\cpx{E})[-1], Y)=0$, the
morphism $(w[-1])ra=0$, and hence there is some morphism $f: X\lra
Y$ in $\Db{A}$ such that $ra=hf$. Then we have the following
commutative diagram in $\Db{A}$
$$\xymatrix@R=4mm @C=4mm{
  & \cpx{V}\ar@{=>}[rd]^{h}\ar@{=>}[ld]_{r} \\
  \cpx{U}\ar@{=>}[d]_{s}\ar[rrd]_(.7){a} && X\ar@{=>}[lld]_(,3){1_X}\ar[d]^{f}\\\
  X && Y,
}$$ which means that the morphisms  $X\llaf{s}\cpx{U}\lraf{a}Y$ and
$X\llaf{1_X}X\lraf{f}Y$ in $\Db{A}/\Kb{\nust{A}}$ are equal. Since
the embedding of $\modcat{A}$ into $\Db{A}$ is fully faithful, the
morphism $f$ is given by a morphism in $\modcat{A}$. Hence the
functor $\Sigma'$ is full, and therefore $\Sigma$ is a full functor.

Suppose that $f: X\lra Y$ is a morphism in $\modcat{A}$ such that
$\Sigma'(f)=0$. That is, the morphisms $X\llaf{1_X}X\lraf{0}Y$ and
$X\llaf{1_X}X\lraf{f}Y$ are equal in $\Db{A}/\Kb{\nust{A}}$. Then
there is a morphism $\cpx{W}\rlaf{s} X$ such that $sf=0$ in
$\Db{A}$. Embedding $s$ into a distinguished triangle in $\Db{A}$,
we see that $f$ factorizes in $\Db{A}$ through a complex in
$\Kb{\nust{A}}$, and therefore it follows easily that $f$ factorizes
in $\modcat{A}$ through an $A$-module $\nust{A}$. Hence the functor
$\Sigma$ is faithful.

If $A$ is self-injective, then $\nust{A}=\pmodcat{A}$ and the
equivalence was proved by Rickard \cite[Theorem 2.1]{RickDstable}.
If $A$ is not self-injective, then there is a projective $A$-module
$P$ not in $\nust{A}$. Suppose that $\Sigma$ is an equivalence. Then
there is some $A$-module $X$ such that $X\simeq P[-1]$ in
$\Db{A}/\Kb{\nust{A}}$. That is, there is an isomorphism
$X\llaf{s}\cpx{U}\rlaf{t}P[-1]$ in $\Db{A}/\Kb{\nust{A}}$. Then by
Octahedral Axiom, we can form the following commutative diagram in
$\Db{A}$
$$\xymatrix{
  \cpx{E_1}\ar[r]\ar@{=}[d] & \cpx{U}\ar@{=>}[r]^{t}\ar@{=>}[d]^{s} & P[-1]\ar[r]\ar[d]^{h} &\cpx{E_1}[1]\ar@{=}[d]\\
  \cpx{E_1}\ar[r]^{g} & X\ar[r]^{t}\ar[d] & \con(g)\ar[r]\ar[d] &\cpx{E_1}[1], \\
  & \cpx{E_2}\ar@{=}[r] &\cpx{E_2}
}$$ where $\cpx{E_1}$ and $\cpx{E_2}$ are in $\Kb{\nust{A}}$, and
$\con(g)$ is the mapping cone of $g$. From the vertical
distinguished triangle on the right side, we see that the mapping
cone $\con(h)$ of $h$ is isomorphic in $\Db{A}$ to a complex
$\cpx{E_2}$ in $\Kb{\nust{A}}$. All the terms of $\con(h)$ in
non-zero degrees are in $\nust{A}$ and $P\oplus X$ is a direct
summand of the $0$-degree term of $\con(h)$. Hence $P$ is isomorphic
to a complex in $\Kb{\nust{A}}$ which is impossible since $P$ is
projective and is not in $\nust{A}$. This finishes the proof.
\end{proof}

{\it Remark:} In the above proposition, suppose that $P$ is a
projective-injective $A$-module, if we replace $\nustcat{A}$ by the
quotient category of $\modcat{A}$ modulo morphisms factorizing
through modules in $\add(P)$, and replace $\Db{A}/\Kb{\nust{A}}$ by
$\Db{A}/\Kb{\add(P)}$, then the proof of Proposition
\ref{PropositionEmbedding} actually can be used to show that in this
case the functor $\Sigma$ is also fully faithful.

\medskip

Now for each iterated almost $\nu$-stable derived equivalence
$F:\Db{A}\lra\Db{B}$. By Lemma \ref{lemmaEtoE}, we see that $F$
induces an equivalence between the triangulated categories
$\Db{A}/\Kb{\nust{A}}$ and $\Db{B}/\Kb{\nust{B}}$. We also denote
this equivalence by $F$. In the following, we will see that there is
an equivalence $\phi_F: \nustcat{A}\lra \nustcat{B}$ such that the
diagram
$$\xymatrix{
  \nustcat{A}\ar[r]^(.35){\Sigma}\ar[d]^{\phi_F} & \Db{A}/\Kb{\nust{A}}\ar[d]^{F}\\
  \nustcat{B}\ar[r]^(.35){\Sigma} & \Db{B}/\Kb{\nust{B}}
 }$$
of additive functors is commutative up to isomorphism. Moreover, the
functor $\phi_F$ also induces an equivalence between the stable
module categories $\stmodcat{A}$ and $\stmodcat{B}$.

\medskip
Before we give the construction of $\phi_F$, we give the following
lemma, which generalizes \cite[Lemma 2.2]{HuXi3} and will be
 used in the construction of $\phi_F$.
\begin{Lemma}
   Let $A$ be an arbitrary ring, and let $A\mbox{\rm -Mod}$ be the
category of all left (not necessarily finitely generated)
$A$-modules. Suppose $\cpx{X}$ is a complex over $A\mbox{\rm -Mod}$
bounded above and $\cpx{Y}$ is a complex over $A\mbox{\rm -Mod}$
bounded below. If there is an integer $m$ such that $X^i$ is
projective for all $i>m$ and $Y^j$ is injective for all $j<m$, then
$\theta_{\cpx{X},\cpx{Y}}: \Hom_{\K{A\mbox{\rm
-Mod}}}(\cpx{X},\cpx{Y})\rightarrow \Hom_{\D{A\mbox{\rm
-Mod}}}(\cpx{X},\cpx{Y})$ induced by the localization functor
$\theta: \K{A\mbox{\rm -Mod}}$ $\rightarrow\D{A\mbox{\rm -Mod}}$ is
an isomorphism. \label{LemmaKDiso}
\end{Lemma}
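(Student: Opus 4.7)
The plan is to reduce the general statement to two classical cases via brutal truncation at degree $m$. Let $\cpx{X}_1:=\sigma_{>m}\cpx{X}$ and $\cpx{X}_2:=\sigma_{\leq m}\cpx{X}$, where $\sigma$ denotes brutal truncation, and dually $\cpx{Y}_1:=\sigma_{\geq m}\cpx{Y}$ and $\cpx{Y}_2:=\sigma_{<m}\cpx{Y}$. The canonical degreewise-split short exact sequences
$$0\lra \cpx{X}_1\lra \cpx{X}\lra \cpx{X}_2\lra 0 \quad \mbox{and} \quad 0\lra \cpx{Y}_1\lra \cpx{Y}\lra \cpx{Y}_2\lra 0$$
induce distinguished triangles in both $\K{A\mbox{-Mod}}$ and $\D{A\mbox{-Mod}}$, and the natural transformation $\theta_{-,-}$ is compatible with them. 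The hypothesis guarantees that $\cpx{X}_1$ is a bounded complex of projectives and $\cpx{Y}_2$ is a bounded complex of injectives.

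I would then invoke two classical facts: $(P)$ for any bounded-above complex $\cpx{P}$ of projectives over $A\mbox{-Mod}$ and any complex $\cpx{Z}$, $\theta_{\cpx{P},\cpx{Z}}$ is an isomorphism; and dually $(I)$ for any bounded-below complex $\cpx{I}$ of injectives and any complex $\cpx{Z}$, $\theta_{\cpx{Z},\cpx{I}}$ is an isomorphism. Applying $\Hom(-,\cpx{Y})$ (in both $\K{A\mbox{-Mod}}$ and $\D{A\mbox{-Mod}}$) to the first triangle and comparing the resulting long exact sequences, fact $(P)$ gives isomorphisms on $\Hom(\cpx{X}_1[n],\cpx{Y})$ for every $n$, so the five-lemma reduces the claim to showing that $\theta_{\cpx{X}_2[n],\cpx{Y}}$ is an isomorphism for every integer $n$. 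Then applying $\Hom(\cpx{X}_2[n],-)$ to the second triangle and invoking $(I)$, a second five-lemma argument further reduces everything to the ``corner case'': $\theta_{\cpx{X}_2[n],\cpx{Y}_1[k]}$ is an isomorphism for all integers $n,k$.

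For the corner case, observe that $\cpx{X}_2[n]$ is supported in degrees $\leq m-n$ and $\cpx{Y}_1[k]$ in degrees $\geq m-k$; when $n>k$ both Hom groups vanish (the $\K$-Hom for support reasons, the $\D$-Hom since the cohomology supports are disjoint), so we may assume $n\leq k$. I would then choose a K-injective resolution $\cpx{Y}_1\lra \cpx{J}$ with $\cpx{J}$ concentrated in degrees $\geq m$, which exists because $\cpx{Y}_1$ is bounded below with cohomology supported in degrees $\geq m$. This yields $\Hom_{\D{A\mbox{-Mod}}}(\cpx{X}_2[n],\cpx{Y}_1[k])\simeq \Hom_{\K{A\mbox{-Mod}}}(\cpx{X}_2[n],\cpx{J}[k])$, and one checks by a direct chain-map computation in the narrow window of degrees $[m-k,m-n]$ that the map $\Hom_{\K{A\mbox{-Mod}}}(\cpx{X}_2[n],\cpx{Y}_1[k])\lra \Hom_{\K{A\mbox{-Mod}}}(\cpx{X}_2[n],\cpx{J}[k])$ induced by the resolution is an isomorphism. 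The main obstacle of the proof is precisely this corner-case analysis: since $X^m$ is not assumed projective and $Y^m$ is not assumed injective, the comparison at the interface degree $m$ requires a careful bookkeeping of the kernel/cokernel structure of the differentials at degree $m$, together with the observation that the resolution $\cpx{Y}_1\lra \cpx{J}$ is a quasi-isomorphism of complexes whose terms agree above degree $m$ after appropriate identifications.
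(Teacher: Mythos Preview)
Your double-truncation reduction is natural, but the corner case is where it breaks. The assertion that $\theta_{\cpx{X}_2[n],\cpx{Y}_1[k]}$ is an isomorphism for all integers $n,k$ is false. Take $m=0$ and let $\cpx{X}=M$, $\cpx{Y}=N$ be $A$-modules concentrated in degree $0$ with $\Ext^1_A(M,N)\neq 0$; the hypotheses of the lemma are then trivially satisfied. Here $\cpx{X}_2=M$ and $\cpx{Y}_1=N$, and for $n=0$, $k=1$ the homotopy-category $\Hom$ vanishes (there are no chain maps from degree $0$ to degree $-1$) while the derived-category $\Hom$ equals $\Ext^1_A(M,N)\neq 0$. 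So no ``direct chain-map computation'' can produce an isomorphism, and your auxiliary claim that the injective resolution $\cpx{J}$ agrees with $\cpx{Y}_1$ above degree $m$ also fails in this example: $\cpx{Y}_1=N$ is zero above degree $0$, its injective resolution is not. Since your second application of the five-lemma (to conclude that $\theta_{\cpx{X}_2,\cpx{Y}}$ is an isomorphism) requires $\theta_{\cpx{X}_2,\cpx{Y}_1[1]}$ to be an isomorphism whenever $\cpx{Y}_2\neq 0$, this is a genuine gap, not a cosmetic one.

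The paper's proof avoids the trap by never asking the edge term to be an isomorphism. It truncates only $\cpx{Y}$ (keeping $\cpx{X}$ intact) and applies $\Hom(\cpx{X},-)$ to the triangle on $\sigma_{\geq 0}\cpx{Y}\to\cpx{Y}\to\sigma_{<0}\cpx{Y}$. Two of the comparison maps are isomorphisms because $\sigma_{<0}\cpx{Y}$ is a bounded complex of injectives; a third, $\theta_{\cpx{X},\sigma_{\geq 0}\cpx{Y}}$, is an isomorphism by the special case \cite[Lemma~2.2]{HuXi3} (where the second complex vanishes, rather than is merely injective, below degree $m$). For the remaining edge term $\theta_{\cpx{X},(\sigma_{\geq 0}\cpx{Y})[1]}$ the refined four-lemma only requires \emph{injectivity}, and the paper establishes precisely that via a two-step bootstrap: first showing $\theta_{\cpx{X},Z[1]}$ is monic for every single module $Z$, then lifting this to $\sigma_{\geq 0}\cpx{Y}$ through one more truncation triangle. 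The obstacle you flagged as ``the main obstacle'' is real; the way around it is to settle for monicity at the boundary and prove that separately, rather than to seek an isomorphism that does not exist.
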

\begin{proof}
Without loss of generality, we can assume that $m=0$. For
simplicity, we write $\mathscr{K}$ for $\K{A\mbox{\rm -Mod}}$ and
$\mathscr{D}$ for $\D{A\mbox{\rm -Mod}}$. Also, the Hom-spaces
$\Hom_{\mathscr{K}}(-, -)$ and $\Hom_{\mathscr{D}}(-, -)$ will be
denoted by $_{\mathscr{K}}(-, -)$ and $_{\mathscr{D}}(-, -)$,
respectively.

First, we show that, for each $A$-module $Z$, the induced map
$$\theta_{\cpx{X}, Z[1]}: {}_{\mathscr{K}}(\cpx{X}, Z[1])\lra {}_{\mathscr{D}}(\cpx{X}, Z[1])$$
is monic. Indeed, applying $_{\mathscr{K}}(-, Z[1])$ and
$_{\mathscr{D}}(-, Z[1])$ to the distinguished triangle
$$\sigma_{\geq 0}\cpx{X}\lra \cpx{X}\lra \sigma_{<0}\cpx{X}\lra (\sigma_{\geq 0}\cpx{X})[1],$$
we get a commutative diagram with exact rows.
$$\xymatrix{
  {}_{\mathscr{K}}(\sigma_{\geq 0}\cpx{X}, Z)\ar[r]\ar[d]^{\theta_{\sigma_{\geq 0}\cpx{X}, Z}}  & {}_{\mathscr{K}}(\sigma_{<0}\cpx{X},
  Z[1])\ar[r]\ar[d]^{\theta_{\sigma_{<0}\cpx{X},
  Z[1]}} &  {}_{\mathscr{K}}(\cpx{X}, Z[1])\ar[r]\ar[d]^{\theta_{\cpx{X}, Z[1]}} &  {}_{\mathscr{K}}(\sigma_{\geq 0}\cpx{X},
  Z[1])\ar[d]^{\theta_{\sigma_{\geq 0}\cpx{X}, Z[1]}}\\
    {}_{\mathscr{D}}(\sigma_{\geq 0}\cpx{X}, Z)\ar[r]  & {}_{\mathscr{D}}(\sigma_{<0}\cpx{X},
  Z[1])\ar[r] &  {}_{\mathscr{D}}(\cpx{X}, Z[1])\ar[r] &  {}_{\mathscr{D}}(\sigma_{\geq 0}\cpx{X}, Z[1])
}$$ By \cite[Lemma 2.2]{HuXi3}, the maps $\theta_{\sigma_{\geq
0}\cpx{X}, Z}$ and $\theta_{\sigma_{<0}\cpx{X},Z[1]}$ are
isomorphisms. Since ${}_{\mathscr{K}}(\sigma_{\geq
0}\cpx{X},Z[1])=0$, the map $\theta_{\sigma_{\geq 0}\cpx{X}, Z[1]}$
is clearly monic. Thus, by the Five Lemma (see, for example
\cite[p.13]{Weibel}), the map $\theta_{\cpx{X},Z[1]}$ is monic.

Next, we show that the map
$$\theta_{\cpx{X}, (\sigma_{\geq 0}\cpx{Y})[1]}: {}_{\mathscr{K}}(\cpx{X}, (\sigma_{\geq 0}\cpx{Y})[1])\lra {}_{\mathscr{D}}(\cpx{X}, (\sigma_{\geq 0}\cpx{Y})[1]) $$
is monic. Indeed, applying ${}_{\mathscr{K}}(\cpx{X}, -)$ and
${}_{\mathscr{D}}(\cpx{X}, -)$ to the distinguished triangle
$$Y^0\lra (\sigma_{>0}\cpx{Y})[1]\lra (\sigma_{\geq 0}\cpx{Y})[1]\lra Y^0[1],$$
we get a commutative diagram with exact rows.
$$\xymatrix{
  {}_{\mathscr{K}}(\cpx{X}, Y^0)\ar[r]\ar[d]^{\theta_{\sigma_{\cpx{X}, Y^0}}}  & {}_{\mathscr{K}}(\cpx{X},
  (\sigma_{>0}\cpx{Y})[1])\ar[r]\ar[d]^{\theta_{\cpx{X},
  (\sigma_{>0}\cpx{Y})[1]}} &  {}_{\mathscr{K}}(\cpx{X}, (\sigma_{\geq 0}\cpx{Y})[1])\ar[r]\ar[d]^{\theta_{\cpx{X}, (\sigma_{\geq 0}\cpx{Y})[1]}} &  {}_{\mathscr{K}}(\cpx{X},
  Y^0[1])\ar[d]^{\theta_{\cpx{X}, Y^0[1]}}\\
  {}_{\mathscr{D}}(\cpx{X}, Y^0)\ar[r] & {}_{\mathscr{D}}(\cpx{X},
  (\sigma_{<0}\cpx{Y})[1])\ar[r] &  {}_{\mathscr{D}}(\cpx{X}, (\sigma_{\geq 0}\cpx{Y})[1])\ar[r] &  {}_{\mathscr{D}}(\cpx{X},
  Y^0[1])
}$$ Again by \cite[Lemma 2.2]{HuXi3}, the left two vertical maps are
isomorphisms. By the above discussion, we see that $\theta_{\cpx{X},
Y^0[1]}$ is monic. So, by the Five Lemma again, the map
$\theta_{\cpx{X}, (\sigma_{\geq 0}\cpx{Y})[1]}$ is monic.

Finally, applying ${}_{\mathscr{K}}(\cpx{X}, -)$ and
${}_{\mathscr{D}}(\cpx{X}, -)$ to the distinguished triangle
$$(\sigma_{<0}\cpx{Y})[-1]\lra \sigma_{\geq 0}\cpx{Y}\lra \cpx{Y} \lra \sigma_{<0}\cpx{Y},$$
we get a commutative diagram
$$\xymatrix@C=5mm{
  {}_{\mathscr{K}}(\cpx{X}, (\sigma_{<0}\cpx{Y})[-1])\ar[r]\ar[d]^{\theta_{\cpx{X}, (\sigma_{<0}\cpx{Y})[-1]}} & {}_{\mathscr{K}}(\cpx{X},
  \sigma_{\geq 0}\cpx{Y})\ar[r]\ar[d]^{\theta_{\cpx{X},
  \sigma_{\geq 0}\cpx{Y}}} &  {}_{\mathscr{K}}(\cpx{X}, \cpx{Y})\ar[r]\ar[d]^{\theta_{\cpx{X}, \cpx{Y}}} &  {}_{\mathscr{K}}(\cpx{X},
  \sigma_{<0}\cpx{Y})\ar[d]^{\theta_{\cpx{X}, \sigma_{<0}\cpx{Y}}} \ar[r] & {}_{\mathscr{K}}(\cpx{X},
  (\sigma_{\geq 0}\cpx{Y})[1])\ar[d]^{\theta_{\cpx{X},
  (\sigma_{\geq 0}\cpx{Y})[1]}}\\
  {}_{\mathscr{D}}(\cpx{X}, (\sigma_{<0}\cpx{Y})[-1])\ar[r] & {}_{\mathscr{D}}(\cpx{X},
  \sigma_{\geq 0}\cpx{Y})\ar[r] &  {}_{\mathscr{D}}(\cpx{X}, \cpx{Y})\ar[r] &  {}_{\mathscr{D}}(\cpx{X},
  \sigma_{<0}\cpx{Y}) \ar[r] & {}_{\mathscr{D}}(\cpx{X},
  (\sigma_{\geq 0}\cpx{Y})[1])\\
}$$ By assumption, the complex $\sigma_{<0}\cpx{Y}$ is a bounded
complex of injective $A$-modules. So, the maps $\theta_{\cpx{X},
(\sigma_{<0}\cpx{Y})[-1]}$ and $\theta_{\cpx{X},
\sigma_{<0}\cpx{Y}}$ are isomorphisms. By \cite[Lemmma 2.2]{HuXi3},
the map $\theta_{\cpx{X},\sigma_{\geq 0}\cpx{Y}}$ is an isomorphism.
We have already proved that the map $\theta_{\cpx{X},(\sigma_{\geq
0}\cpx{Y})[1]}$ is monic. Then by applying the Five Lemma again, the
proof is completed.
\end{proof}

Now we fix some notations for the rest of this section. Let
$F:\Db{A}\lra\Db{B}$ be an iterated almost $\nu$-stable derived
equivalences between two Artin algebras $A$ and $B$, and let $G$ be
a quasi-inverse of $F$. Let $\cpx{T}$ and $\cpx{\bar{T}}$ be the
tilting complexes associated to $F$ and $G$, respectively.
Then by Theorem \ref{Theorem 1} and Lemma \ref{equivCond}, the terms
of $\cpx{T}$ in non-zero degrees are all in $\nust{A}$, and the
terms of $\cpx{\bar{T}}$ in non-zero degrees are all in $\nust{B}$.

Keeping these notations, we have the following lemma.
\begin{Lemma}
  For each $A$-module $X$, the complex $F(X)$ is isomorphic in
  $\Db{B}$ to a radical complex $\cpx{\bar{T}_X}$  with
  $\bar{T}_X^{\pm}\in\nust{B}$.
  Moreover, the complex $\cpx{\bar{T}_X}$ of this form is unique up to
  isomorphism in $\Cb{B}$. In particular, if $X$ is a projective (respectively, injective)
  module, then $\cpx{\bar{T}_X}$ is isomorphic in $\Cb{B}$ to a
  complex in $\add(\cpx{\bar{T}})$ (respectively, $\add(\nu_B\cpx{\bar{T}})$).
   \label{complexTX}
\end{Lemma}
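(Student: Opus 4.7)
My plan is to handle three tasks separately: the projective and injective special cases, existence of $\cpx{\bar{T}_X}$ for a general module, and uniqueness.

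For the projective case I would observe that $F(A)\simeq F(G(\cpx{\bar{T}}))\simeq\cpx{\bar{T}}$, so for a projective $A$-module $P$ which is a summand of $A^{n}$, the complex $F(P)$ is a summand of $\cpx{\bar{T}}^{n}$ in $\Db{B}$; a radical representative $\cpx{\bar{T}_{P}}$ then already lies in $\add(\cpx{\bar{T}})$ inside $\Cb{B}$, and by Theorem \ref{Theorem 1} its terms in non-zero degrees lie in $\nust{B}$. For the injective case I would write $I\simeq\nu_{A}P$ for a suitable projective $P$ and invoke the compatibility $F\nu_{A}\simeq\nu_{B}F$ on $\Kb{\pmodcat{A}}$ to reduce to the projective case, using that $\nust{B}$ is stable under $\nu_{B}$ by construction.

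For existence in the general case I would proceed by induction on the length $m$ of a decomposition $F\simeq F_{1}F_{2}\cdots F_{m}$ with each $F_{i}$ or $F_{i}^{-1}$ almost $\nu$-stable. The base case $m=1$ is (a straightforward dual of) \cite[Lemma 3.1]{HuXi3}: for an almost $\nu$-stable derived equivalence, $F(X)$ admits a radical representative of the required one-sided form. For the inductive step I set $F=F_{1}\circ F'$ with $F'=F_{2}\cdots F_{m}:\Db{A}\to\Db{C}$; by the inductive hypothesis $F'(X)\simeq\cpx{Y}$ is a radical complex with $Y^{\pm}\in\nust{C}$. I then apply $F_{1}:\Db{C}\to\Db{B}$ termwise along the lines of Lemma \ref{lemmaFormofFX}, producing $F(X)\simeq\cpx{Z}$ with $Z^{k}=\bigoplus_{i+j=k}Y_{i}^{j}$, where $\cpx{Y_{i}}$ represents $F_{1}(Y^{i})$. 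For $i\neq 0$ the assumption $Y^{i}\in\nust{C}$ together with Lemma \ref{lemmaEtoE} applied to $F_{1}$ allows $\cpx{Y_{i}}$ to be taken in $\Kb{\nust{B}}$; for $i=0$ the base case applied to $F_{1}$ yields $Y_{0}^{\pm}\in\nust{B}$. Hence for $k\neq 0$ every contribution $Y_{i}^{j}$ with $i+j=k$ satisfies $i\neq 0$ or $j\neq 0$, forcing $Y_{i}^{j}\in\nust{B}$; thus $Z^{\pm}\in\nust{B}$, and a radical representative inherits this termwise.

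For uniqueness, suppose $\cpx{W_{1}}$ and $\cpx{W_{2}}$ are two such radical candidates for $F(X)$. Since $\nust{B}$ consists of projective-injective modules, $W_{1}^{i}$ is projective for all $i>0$ and $W_{2}^{j}$ is injective for all $j<0$. Lemma \ref{LemmaKDiso} applied with $m=0$ then yields $\Hom_{\Kb{B}}(\cpx{W_{1}},\cpx{W_{2}})\simeq\Hom_{\Db{B}}(\cpx{W_{1}},\cpx{W_{2}})$, so the isomorphism in $\Db{B}$ lifts to $\Kb{B}$; because both complexes are radical, it lifts further to $\Cb{B}$.

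The main subtlety will be the inductive step: Lemma \ref{lemmaFormofFX} is stated for triangle functors between bounded homotopy categories of additive categories, while I must apply it to a bounded complex of $C$-modules along a functor of derived categories. I would handle this by observing that the mapping-cone induction used in its proof carries over to any triangulated target once specific representatives $\cpx{Y_{i}}$ of $F_{1}(Y^{i})$ are fixed, which is possible here because for $i\neq 0$ these representatives can be chosen in $\Kb{\nust{B}}$ (complexes of projective-injective modules, where chain maps compute derived morphisms) and for $i=0$ the base case supplies an explicit one.
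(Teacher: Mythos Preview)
Your proposal is correct, and the uniqueness argument via Lemma~\ref{LemmaKDiso} as well as the treatment of the projective and injective cases coincide with the paper's.  The difference lies in the existence step for a general module.

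The paper does not induct on the length $m$ of an arbitrary decomposition.  Instead it invokes the proof of Theorem~\ref{Theorem 1}, which shows that every iterated almost $\nu$-stable derived equivalence admits a decomposition $F\simeq F_{2}F_{1}^{-1}$ with \emph{both} $F_{1}:\Db{C}\to\Db{A}$ and $F_{2}:\Db{C}\to\Db{B}$ almost $\nu$-stable.  One then applies \cite[Lemma~3.2]{HuXi3} to obtain a \emph{one-sided} representative $\cpx{Q_{X}}$ of $F_{1}^{-1}(X)$ (vanishing in positive degrees, terms in $\nust{C}$ in negative degrees), and a single distinguished triangle
\[
(\sigma_{<0}\cpx{Q_{X}})[-1]\lra Q_{X}^{0}\lra \cpx{Q_{X}}\lra \sigma_{<0}\cpx{Q_{X}}
\]
suffices: after applying $F_{2}$, the first term lands in $\Kb{\nust{B}}$ by Lemma~\ref{lemmaEtoE}, the second in a one-sided complex by \cite[Lemma~3.1]{HuXi3}, and one mapping cone finishes the job.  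Only one morphism in $\Db{B}$ needs to be lifted to a chain map, and it has source a bounded complex of projectives.

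Your inductive route works too, but the intermediate complex $\cpx{Y}=F'(X)$ is in general two-sided, so the adapted Lemma~\ref{lemmaFormofFX} argument has to peel off terms on both sides of degree~$0$.  The delicate point you flag is real: at each inductive step one must lift a morphism in $\Db{B}$ to $\Kb{B}$, and this succeeds only because the relevant source or target is a bounded complex of projective-injectives.  Concretely, one should order the truncations so that all $Y^{i}$ with $i>0$ are absorbed first (giving a partial result in $\Kb{\nust{B}}$, hence a complex of injectives, so the map from $\cpx{Y_{0}}[-1]$ lifts), and only afterwards the $Y^{i}$ with $i<0$ (whose images $\cpx{Y_{i}}$ are complexes of projectives, so maps out of them lift).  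Your proposal gestures at this but does not make the order explicit; it is worth spelling out.  The payoff of your approach is that it works directly from the definition of ``iterated'' without appealing to the normal form $F\simeq F_{2}F_{1}^{-1}$; the payoff of the paper's approach is that it replaces an $m$-step induction by a single cone.
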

\begin{proof}
   By the proof of Theorem \ref{Theorem 1}, we see that $F\simeq
   F_2F_1^{-1}$ for two almost $\nu$-stable derived equivalences
   $F_1:\Db{C}\lra\Db{A}$ and $F_2:\Db{C}\lra\Db{B}$. For each
   $A$-module $X$, by \cite[Lemma 3.2]{HuXi3} and the definition of
   almost $\nu$-stable derived equivalences, we see that
   $F_1^{-1}(X)$ is isomorphic in $\Db{C}$ to a complex $\cpx{Q_X}$
   with $Q_X^i=0$ for all $i>0$ and $Q_X^i\in\nust{C}$ for all
   $i<0$. Applying $F_2$ to the distinguished triangle
   $(\sigma_{<0}\cpx{Q_X})[-1]\lra Q_X^0\lra \cpx{Q_X}\lra
   \sigma_{<0}\cpx{Q_X}$,
   we get a distinguished triangle in $\Db{B}$
   $$F_2(\sigma_{<0}\cpx{Q_X})[-1]\lra F_2(Q_X^0)\lra F_2(\cpx{Q_X})\lra F_2(\sigma_{<0}\cpx{Q_X}).$$
   Since $(\sigma_{<0}\cpx{Q_X})[-1]$ is a complex in
   $\Kb{\nust{C}}$, by Lemma \ref{lemmaEtoE}, the complex
   $F_2(\sigma_{<0}\cpx{Q_X})[-1]$ is isomorphic in $\Db{B}$ to a
   complex $\cpx{U}$ in $\Kb{\nust{B}}$. By \cite[Lemma 3.1]{HuXi3}
   and the definition of almost $\nu$-stable derived equivalences,
   the complex $F_2(Q_X^0)$ is isomorphic in $\Db{B}$ to a complex
   $\cpx{V}$ with $V^i\in\nust{B}$ for all $i>0$ and $V^i=0$ for all
   $i<0$. Thus, the complex $F_2(\cpx{Q_X})$, which is isomorphic in $\Db{B}$ to
   $F(X)$, is isomorphic in $\Db{B}$ to the mapping cone $\con(\alpha)$ of a chain map
   $\alpha$ from $\cpx{U}$ to $\cpx{V}$. Now it is clear that all
   the terms of $\con(\alpha)$ in non-zero degrees are in
   $\nust{B}$. Taking a radical complex $\cpx{\bar{T}_X}$ which is
   isomorphic to $\con(\alpha)$ in $\Kb{B}$, we see that $F(X)$ is
   isomorphic to $\cpx{\bar{T}_X}$ and $\bar{T}_X^{\pm}\in\nust{B}$.

   Suppose that $\cpx{W}$ is another radical complex with
   $W^{\pm}\in\nust{B}$, and $F(X)\simeq
   \cpx{W}$. Then $\cpx{W}$ and $\cpx{\bar{T}_X}$ are isomorphic in
   $\Db{B}$. By Lemma \ref{LemmaKDiso}, they are isomorphic in
   $\Kb{B}$. Since both $\cpx{W}$ and $\cpx{\bar{T}_X}$ are radical
   complexes, they are also isomorphic in $\Cb{B}$.

   Since all the complexes
   in $\add(\cpx{\bar{T}})$ and $\add(\nu_B\cpx{\bar{T}})$ have the desired
   form, the last statement follows by the uniqueness of
   $\cpx{\bar{T}_X}$.
\end{proof}

In the following, without loss of generality, we fix for each
$A$-module $X$ a complex $\cpx{\bar{T}_X}$ defined in Lemma
\ref{complexTX} and assume that  $F(X)=\cpx{\bar{T}_X}$ for all
$A$-modules $X$. Let $X$ and $Y$ be two $A$-modules. There is a
natural isomorphism
$$\Hom_A(X, Y)\simeq \Hom_{\Db{B}}(\cpx{\bar{T}_X}, \cpx{\bar{T}_Y})$$
sending $f$ to $F(f)$. By Lemma \ref{LemmaKDiso}, there is a natural
isomorphism $$\Hom_{\Kb{B}}(\cpx{\bar{T}_X}, \cpx{\bar{T}_Y})\simeq
\Hom_{\Db{B}}(\cpx{\bar{T}_X}, \cpx{\bar{T}_Y})$$ induced by the
localization functor from $\Kb{B}$ to $\Db{B}$. It is easy to see
that there is a natural map
$$\Hom_{\Kb{B}}(\cpx{\bar{T}_X}, \cpx{\bar{T}_Y})\lra \nustHom_B(\bar{T}_X^0, \bar{T}_Y^0)$$
sending $\cpx{u}$ to $\underline{u}^0$. Indeed, if $\cpx{u}=\cpx{v}$
in $\Hom_{\Kb{B}}(\cpx{\bar{T}_X}, \cpx{\bar{T}_Y})$, then $u^0-v^0$
factorizes through $\bar{T}_X^1\oplus\bar{T}_Y^{-1}$ which is in
$\nust{B}$ by definition. This means
$\underline{u}^0-\underline{v}^0=0$ in $\nustHom_B(\bar{T}_X^0,
\bar{T}_Y^0)$. Altogether, we have a natural morphism
$$\phi: \Hom_A(X, Y)\lra \nustHom_B(\bar{T}_X^0, \bar{T}_Y^0)$$
sending $f$ to $\underline{u}^0$, where $\cpx{u}$ is a chain map
such that $\cpx{u}=F(f)$. Now if $f$ factorizes through an
$A$-module in $\nust{A}$, then $\cpx{u}$ factorizes through a
complex $\cpx{P}$ in $\Kb{\nust{B}}$ by Lemma \ref{lemmaEtoE}. By
Lemma \ref{LemmaKDiso}, we can assume that $\cpx{u}=\cpx{g}\cpx{h}$
in $\Kb{B}$ for chain maps $\cpx{g}: \cpx{\bar{T}_X}\lra \cpx{P}$
and $\cpx{h}: \cpx{P}\lra \cpx{\bar{T}_Y}$. Thus, it follows that
$u^0-g^0h^0$ factorizes through $\bar{T}_X^1\oplus\bar{T}_Y^{-1}$,
and consequently $u^0$ factorizes through $P^0\oplus
\bar{T}_X^1\oplus\bar{T}_Y^{-1}$ which is in $\nust{B}$. Hence
$\underline{u}^0=0$. Hence we get a natural morphism
$$\bar{\phi}: \nustHom_A(X, Y)\lra \nustHom_B(\bar{T}_X^0, \bar{T}_Y^0)$$
Now we define a functor $\phi_F: \nustcat{A}\lra \nustcat{B}$. For
each $A$-module $X$, we set $\phi_F(X):=\bar{T}_X^0$, and for each
morphism $\underline{f}\in\nustHom_A(X, Y)$, we define
$\phi_F(\underline{f}):=\underline{u}^0$, where $\cpx{u}=F(f)$. Now
it is easy to see that the diagram
$$\xymatrix{
  \nustcat{A}\ar[r]^(.35){\Sigma}\ar[d]^{\phi_F} & \Db{A}/\Kb{\nust{A}}\ar[d]^{F}\\
\nustcat{B}\ar[r]^(.35){\Sigma} & \Db{B}/\Kb{\nust{B}}
 }\quad \quad (\clubsuit)$$
is commutative up to isomorphism. Indeed, one can check that the
isomorphism $\cpx{\bar{T}_X}\llaf{s}\sigma_{\geq
0}\cpx{\bar{T}_X}\rlaf{t}X$ in $\Db{B}/\Kb{\nust{B}}$ with $s$ and
$t$ the canonical maps is a natural map, and this gives rise to an
 isomorphism from the functor $F\Sigma$ to the functor
$\Sigma\phi_F$.

\medskip
 For an Artin algebra, in the
following theorem, we denote by $\stimodcat{A}$ the quotient
category of $\modcat{A}$ modulo morphisms factorizing through
injective modules.
\begin{Theorem}
   Let $F:\Db{A}\lra\Db{B}$ be an iterated almost $\nu$-stable
   derived equivalence. Then we have the following:

   $(1)$ The functor $\phi_F: \nustcat{A}\lra\nustcat{B}$ is an
   equivalence;

   $(2)$ The functor $\phi_F$ induces an equivalence between
   $\stimodcat{A}$ and $\stimodcat{B}$;

   $(3)$ The functor $\phi_F$ induces an equivalence between $\stmodcat{A}$ and
   $\stmodcat{B}$;

   $(4)$ The functor $\phi_F$ is uniquely (up to
   isomorphism) determined by the commutative diagram $(\clubsuit)$.
   Moreover, if $F': \Db{B}\lra\Db{C}$ is another iterated
   almost $\nu$-stable derived equivalence, then $\phi_{F'F}\simeq
   \phi_{F'}\phi_F$.\label{TheoremStableFunctor}
\end{Theorem}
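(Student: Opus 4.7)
The plan is to deduce parts (1) and (4) directly from the commutativity (up to isomorphism) of the diagram $(\clubsuit)$, combined with the full faithfulness of both copies of $\Sigma$ (Proposition \ref{PropositionEmbedding}) and the fact that the right vertical functor $F$ is a triangle equivalence between the two Verdier quotients (a consequence of Lemma \ref{lemmaEtoE} together with the universal property of Verdier quotients). Parts (2) and (3) will then follow by inspecting where $\phi_F$ sends injective and projective modules using Lemma \ref{complexTX}.

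For (1), faithfulness and fullness of $\phi_F$ are formal: if $\phi_F(\underline{f})=0$, then $F\Sigma(\underline{f})\simeq\Sigma\phi_F(\underline{f})=0$, so successive faithfulness of $F$ and $\Sigma$ forces $\underline{f}=0$; given $\underline{g}:\phi_F(X)\lra\phi_F(Y)$ in $\nustcat{B}$, transport $\Sigma(\underline{g})$ across the natural isomorphism $\Sigma\phi_F\simeq F\Sigma$ and apply a quasi-inverse of $F$ to obtain a morphism $\Sigma(X)\lra\Sigma(Y)$, lift it to some $\underline{f}:X\lra Y$ by fullness of $\Sigma$ on the $A$-side, and note that $\phi_F(\underline{f})$ and $\underline{g}$ then agree under $\Sigma$, hence are equal by faithfulness of $\Sigma$ on the $B$-side. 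For essential surjectivity, let $Y$ be a $B$-module and set $G:=F^{-1}$, which is again iterated almost $\nu$-stable. Applying Lemma \ref{complexTX} to $G$ yields a radical complex $\cpx{T_Y}$ with $T_Y^{\pm}\in\nust{A}$ and $G(Y)\simeq\cpx{T_Y}$ in $\Db{A}$. The short exact sequences of complexes $0\to\sigma_{\leq 0}\cpx{T_Y}\to\cpx{T_Y}\to\sigma_{>0}\cpx{T_Y}\to 0$ and $0\to\sigma_{<0}\cpx{T_Y}\to\sigma_{\leq 0}\cpx{T_Y}\to T_Y^0\to 0$ give triangles in $\Db{A}$ whose outer terms lie in $\Kb{\nust{A}}$, so $\cpx{T_Y}\simeq T_Y^0$ in $\Db{A}/\Kb{\nust{A}}$; applying $F$ and using $(\clubsuit)$ gives $\Sigma(Y)\simeq\Sigma\phi_F(T_Y^0)$, and full faithfulness of $\Sigma$ yields $Y\simeq\phi_F(T_Y^0)$ in $\nustcat{B}$.

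The uniqueness statement in (4) is an immediate consequence of the same principle: if a functor $\phi$ satisfies $\Sigma\phi\simeq F\Sigma\simeq\Sigma\phi_F$, then $\phi\simeq\phi_F$ by full faithfulness of $\Sigma$. The composition formula $\phi_{F'F}\simeq\phi_{F'}\phi_F$ then follows because $\Sigma\phi_{F'}\phi_F\simeq F'\Sigma\phi_F\simeq F'F\Sigma$, so $\phi_{F'}\phi_F$ satisfies the characterizing property of $\phi_{F'F}$.

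For (2) and (3), Lemma \ref{complexTX} gives that if $I$ is an injective $A$-module then $F(I)=\cpx{\bar{T}_I}$ with $\cpx{\bar{T}_I}\in\add(\nu_B\cpx{\bar{T}})$ in $\Cb{B}$, so $\phi_F(I)=\bar{T}_I^0\in\add(\nu_B\bar{T}^0)$ is an injective $B$-module; together with fullness of $\phi_F$ this shows morphisms factoring through injective $A$-modules are mapped to morphisms factoring through injective $B$-modules, so $\phi_F$ descends to a functor $\stimodcat{A}\lra\stimodcat{B}$, with quasi-inverse induced by $\phi_G$. Statement (3) is parallel: for projective $P$ the complex $\cpx{\bar{T}_P}$ lies in $\add(\cpx{\bar{T}})$, so $\phi_F(P)=\bar{T}_P^0\in\add(\bar{T}^0)$ is projective. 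The main obstacle I foresee is the essential-surjectivity step, namely translating the derived-category isomorphism $G(Y)\simeq\cpx{T_Y}$ into an isomorphism in $\nustcat{B}$; this requires a careful passage through $(\clubsuit)$ and full faithfulness of $\Sigma$ to descend from the Verdier quotients to the $\nu$-stable module categories, and it is precisely the condition $T_Y^{\pm}\in\nust{A}$ provided by Lemma \ref{complexTX} that makes the brutal truncations of $\cpx{T_Y}$ collapse in the Verdier quotient.
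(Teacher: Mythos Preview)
Your proposal is correct and rests on the same key ingredients as the paper: full faithfulness of $\Sigma$ (Proposition \ref{PropositionEmbedding}), the commutative square $(\clubsuit)$, and Lemma \ref{complexTX} for parts (2) and (3). The only difference is organizational in part (1): instead of checking faithfulness, fullness, and essential surjectivity separately, the paper simply notes that $\Sigma\,\phi_G\phi_F\simeq GF\,\Sigma\simeq\Sigma$ and $\Sigma\,\phi_F\phi_G\simeq FG\,\Sigma\simeq\Sigma$, whence full faithfulness of $\Sigma$ gives $\phi_G\phi_F\simeq 1$ and $\phi_F\phi_G\simeq 1$ at once --- this bypasses your explicit truncation argument for essential surjectivity, which in effect just unpacks the identity $\phi_F\phi_G(Y)\simeq Y$.
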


\begin{proof}
  Let $G$ be a quasi-inverse of $F$. Then $G$ also induces an
  equivalence between $\Db{B}/\Kb{\nust{B}}$ and
  $\Db{A}/\Kb{\nust{A}}$. We also denote it by $G$. Then by the
  above commutative diagram of additive functors, the functor
  $\Sigma\phi_G\phi_F$ is isomorphic to the functor $GF\Sigma$,
  which is isomorphic to $\Sigma$. By Proposition
  \ref{PropositionEmbedding}, the functor $\Sigma$ is a fully faithful
  embedding. Hence $\phi_G\phi_F$ is isomorphic to
  $1_{\nustcat{A}}$.  By symmetry, the functor $\phi_F\phi_G$ is
  also isomorphic to $1_{\nustcat{B}}$. Hence $\phi_F$ is
  an equivalence, and $(1)$ is proved.

 By the construction of $\phi_F$, it follows from Lemma
 \ref{complexTX} that $\phi_F$ sends projective modules to
 projective modules, and sends injective modules to injective
 modules. Moreover, the modules in $\nust{A}$ and $\nust{B}$ are all
 projective-injective. Thus, the statements $(2)$ and $(3)$ follow.

 (4) If $\phi: \nustcat{A}\lra\nustcat{B}$ is a functor such that $\Sigma\phi\simeq
 F\Sigma$, then the functor $\Sigma\phi$ is isomorphic to
 $\Sigma\phi_F$. Hence $\phi\simeq \phi_F$ since $\Sigma$ is fully
 faithful. The rest of (4) follows similarly.
\end{proof}

\medskip
{\it Remark: } (1) It follows from the definition of iterated almost
$\nu$-stable derived equivalences and \cite[Theorem 3.7]{HuXi3} that
every iterated almost $\nu$-stable derived equivalence induces an
equivalence between the stable module categories, however, the proof
of Theorem \ref{TheoremStableFunctor} presented here is not based on
the earlier result \cite[Thoerem 3.7]{HuXi3}, and is completely
different from the proof there. Moreover, Theorem
\ref{TheoremStableFunctor} is more general than \cite[Theorem
3.7]{HuXi3} since we get an equivalence between $\nustcat{A}$ and
$\nustcat{B}$ which is not obtained in \cite[Theorem 3.7]{HuXi3}.

(2) In case that $F$ is an almost $\nu$-stable derived equivalence,
it follows by definition that the stable equivalence from
$\stmodcat{A}$ to $\stmodcat{B}$ induced by the functor $\phi_F$
coincides with the stable functor $\bar{F}$ considered in
\cite{LiuXi3}.

\section{Constructions of iterated almost $\nu$-stable derived equivalences}

In this section, we shall give some constructions of iterated almost
$\nu$-stable derived equivalences.

Let us recall from \cite{AS} the definition of approximations. Let
$\cal C$ be a category, and let $\cal D$ be a full subcategory of
$\cal C$, and $X$ an object in $\cal C$. A morphism $f: D\lra X$ in
$\cal C$ is called a \emph{right} $\cal D$-\emph{approximation} of
$X$ if $D\in {\cal D}$ and the induced map Hom$_{\cal C}(D',f)$:
Hom$_{\cal C}(D',D)\lra$ Hom$_{\cal C}(D',X)$ is surjective for
every object $D'\in {\cal D}$. Dually, one can define \emph{left}
$\cal D$-\emph{approximations}.

 By Theorem \ref{Theorem 1}, to get an iterated
almost $\nu$-stable derived equivalence, we only need to construct a
derived equivalence with the involved tilting complexes satisfying
the conditions in Theorem \ref{Theorem 1}. Let $A$ be an algebra,
and let $P, Q$ be two projective $A$-modules satisfying the
following two conditions:

\smallskip
    (1) $\add(_AP)=\add(\nu_AP)$, $\add(_AQ)=\add(\nu_AQ)$;

    (2) $\Hom_A(P, Q)=0.$

\smallskip
{\parindent=0pt For each positive integer $r$, we can form the
following complex}:
$$0\lra P^{-r}\lraf{f_r}\lra P^{-r+1}\lra\cdots\lra P^{-1}\lraf{f_1} A\lra 0, $$
where $f_1: P^{-1}\lra A$ is a right $\add(_AP)$-approximation of
$A$, and $f_{i+1}: P^{-i-1}\lra P^{-i}$ is a right
$\add(_AP)$-approximation of $\Ker(f_i)$ for $i=1,\cdots, r-1$.
Similarly, we can form a complex
$$0\lra A\lraf{g_1} Q^1\lra \cdots\lra Q^{s-1}\lraf{g_s} Q^s\lra 0, $$
where $g_1$ is a left $\add(_AQ)$-approximation of $A$, and
$g_{i+1}$ is a left $\add(_AQ)$-approximation of $\Coker (g_i)$ for
$i=1,2,\cdots, s-1$. Since $\Hom_A(P, Q)=0$, connecting  the two
complexes together, we get a complex
$$0\lra P^{-r}\lra\cdots\lra P^{-1}\lraf{f_1} A\lraf{g_1} Q^1\lra\cdots\lra Q^s\lra 0, $$
where $A$ is in degree zero. We denote this complex by
$\cpx{T_{P,Q}}$, and let $\cpx{T}:=\cpx{T_{P, Q}}\oplus P[r]\oplus
  Q[-s]$.
\begin{Proposition}\label{ApproxConstruction}
  Keeping the notations above, we have the following:

  $(1)$ The complex $\cpx{T}$ is a tilting complex.

  $(2)$ Let $B:=\End_{\Db{A}}(\cpx{T})$. Then $\cpx{T}$ induces an
  iterated almost $\nu$-stable derived equivalence between the algebras  $A$ and
  $B$.
\end{Proposition}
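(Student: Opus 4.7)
The plan is to handle (1) and (2) in turn.

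For (1), I would verify the two defining conditions of a tilting complex. Self-orthogonality $\Hom_{\Kb{A}}(\cpx{T},\cpx{T}[n])=0$ for $n\neq 0$ splits according to which summands of $\cpx{T}$ are involved. Homs within the negative half of $\cpx{T_{P,Q}}$ vanish in nonzero degrees because each $f_i$ is a right $\add(_AP)$-approximation; dually for the positive half and the left $\add(_AQ)$-approximations $g_j$. Cross-Homs between the negative and positive halves vanish by the hypothesis $\Hom_A(P,Q)=0$, together with an induced vanishing obtained from the approximation sequences themselves. The stalk summands $P[r]$ and $Q[-s]$ are handled by the same approximation arguments. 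For generation, $P[r]$ and $Q[-s]$ are summands of $\cpx{T}$, placing each $P^{-i}$ and $Q^j$ in the thick subcategory generated by $\add(\cpx{T})$; iterating mapping cones along the differentials of $\cpx{T_{P,Q}}$ places every brutal truncation in this subcategory, finally producing $A$ in degree $0$ and hence all of $\Kb{\pmodcat{A}}$.

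For (2), I would apply Theorem \ref{Theorem 1}. The condition $\add(_AT^{\pm})=\add(\nu_AT^{\pm})$ is immediate: every nonzero-degree term of $\cpx{T}$ lies in $\add(_AP\oplus {}_AQ)$, and this summand category is $\nu$-stable by hypothesis; equivalently $T^{\pm}\in\nust{A}$ by Lemma \ref{equivCond}. The second condition $\add(_B\bar{T}^{\pm})=\add(\nu_B\bar{T}^{\pm})$ I would obtain by factoring $F$ through a suitable intermediate algebra. Concretely, set $\cpx{T}^{(1)}:=\bigl(0\to P^{-r}\to\cdots\to P^{-1}\to A\to 0\bigr)\oplus P[r]\oplus Q$, obtained by truncating $\cpx{T_{P,Q}}$ to its negative half and adjoining $P[r]\oplus Q$. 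The arguments from part (1) adapt to show $\cpx{T}^{(1)}$ is a tilting complex lying in degrees $[-r,0]$ with all negative terms in $\add(_AP)\subseteq\nust{A}$, so the associated derived equivalence $F_1:\Db{A}\lra\Db{C}$, with $C:=\End_{\Db{A}}(\cpx{T}^{(1)})$, is almost $\nu$-stable by definition. Setting $F_2:=FF_1^{-1}:\Db{C}\lra\Db{B}$ and computing $F_1(\cpx{T})$ via Lemma \ref{lemmaFormofFX}---using that $F_1(P[r])$ and $F_1(Q)$ are direct summands of $C$, and that $F_1(P^i), F_1(Q^j)\in\Kb{\nust{C}}$ by Lemma \ref{lemmaEtoE}---one checks that $F_1(\cpx{T})$ is concentrated in degrees $[0,s]$ with positive terms in $\nust{C}$. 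Consequently the tilting complex associated to $F_2^{-1}$ is concentrated in degrees $[-s,0]$ by \cite[Lemma 2.1]{HuXi3}; a parallel argument on the $B$ side then gives its negative terms in $\nust{B}$, so $F_2^{-1}$ is almost $\nu$-stable. Hence $F\simeq F_2F_1$ is iterated almost $\nu$-stable.

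The main obstacle I expect is the last step: confirming that the tilting complex associated to $F_2^{-1}$ has its negative terms in $\nust{B}$. Lemma \ref{lemmaEtoE} would give this if we already knew $F_2^{-1}$ was iterated almost $\nu$-stable, but invoking it here risks circularity. I anticipate needing a direct argument modelled on the proof of Lemma \ref{sufficientCond}, carried out over $B$ instead of $A$, which tracks approximation sequences through the equivalence $F_2$; a possible alternative would be to exhibit $\cpx{\bar{T}}$ explicitly via dual approximation sequences over $B$ built from $\Hom_A(P,\cpx{T})$ and $\Hom_A(Q,\cpx{T})$ as $B$-modules.
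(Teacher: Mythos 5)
Your argument for part (1) is essentially the paper's: one shows null-homotopy of a chain map $\cpx{u}\colon\cpx{T}\to\cpx{T}[i]$ ($i>0$) by observing that $\Hom_A(P,Q)=0$ forces $u^k=0$ for $-i<k<0$, and then one builds the homotopy recursively from $u^{-i}$ outward using that the $f_j$ are right $\add(P)$-approximations of the successive kernels; the $i<0$ case is dual (using that $\Hom_A(Q,P)=0$ also follows from the hypotheses via $\nu$-stability), and generation follows since $P[r],Q[-s]\in\add(\cpx{T})$ let one peel off terms of $\cpx{T_{P,Q}}$ to reach $A$.

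Part (2) as you propose it has two real problems, and the second one you already anticipate. First, the claim that $F_1$ is ``almost $\nu$-stable by definition'' once you know $T^{(1),\pm}\in\add(P)\subseteq\nust{A}$ is not justified: the definition requires the companion condition $\add(\bar{T}^{(1),\pm})=\add(\nu_C\bar{T}^{(1),\pm})$ on the tilting complex of $F_1^{-1}$, and nothing you have said establishes this. Second, the step you flag as the obstacle is in fact circular: the tilting complex associated to $F_2^{-1}=F_1F^{-1}$ over $B$ is $F_2(C)\simeq F(\cpx{T}^{(1)})$, and since $A$ is a term of $\cpx{T}^{(1)}$, computing this requires knowing $F(A)=\cpx{\bar{T}}$, which is precisely the complex whose non-zero-degree terms you are trying to control.

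The paper sidesteps the factorization entirely. It computes $\cpx{\bar{T}}\simeq F(A)$ directly: writing $\cpx{T_{P,Q}}$ as built from the subcomplexes $\cpx{P}\colon 0\to P^{-r}\to\cdots\to P^{-1}\to 0$ and $\cpx{Q}\colon 0\to Q^1\to\cdots\to Q^s\to 0$ via two distinguished triangles, one applies Lemma \ref{lemmaFormofFX} to see that $F(\cpx{P})$ lives in $\Kb{\add(\bar{P})}$ in degrees $[0,r-1]$ and $F(\cpx{Q})$ in $\Kb{\add(\bar{Q})}$ in degrees $[-s+1,0]$, where $\bar{P}=F(P[r])$ and $\bar{Q}=F(Q[-s])$ are projective $B$-modules. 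Taking cones yields $\cpx{\bar{T}}\simeq F(A)$ concentrated with $\bar{Q}$-terms in negative degrees, $\bar{U}=F(\cpx{T_{P,Q}})$ in degree $0$, and $\bar{P}$-terms in positive degrees. Since $F$ commutes with $\nu$ on perfect complexes, $\add(\nu_B\bar{P})=\add(\bar{P})$ and $\add(\nu_B\bar{Q})=\add(\bar{Q})$, giving $\add(\bar{T}^{\pm})=\add(\nu_B\bar{T}^{\pm})$, and Theorem \ref{Theorem 1} concludes. Your closing sentence hints at something like this, but your main route does not reach it; you would need to replace the factorization with a direct computation of $\cpx{\bar{T}}$ along these lines.
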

\begin{proof}
(1) By the construction of $\cpx{T}$, we have
$$T^i=\left\{
  \begin{array}{ll}
    P^{-r}\oplus P, & {i=-r}\,; \\
    P^{i}, & {-r<i<0}\,; \\
    A, & {i=0\,;} \\
    Q^i, & {0<i<s\,;} \\
    Q^s\oplus Q, & {i=s\,;}\\
   0 & \hbox{otherwise.}

  \end{array}
\right., \mbox{ and } d_T^i= \left\{
  \begin{array}{ll}
    \genfrac[]{0pt}{1}{f_r}{0}, & {i=-r}\,; \\
    f_{-i}, & {-r<i<0}\,; \\
    g_{i+1}, & {0\leq i<s-1\,;} \\
    \genfrac[]{0pt}{1}{g_s}{0}, & {i=s-1\,;}\\
   0 & \hbox{otherwise.}
  \end{array}
\right.$$

 We first show that $\Hom_{\Kb{\pmodcat{A}}}(\cpx{T},\cpx{T}[i])=0$
 for all $i\neq 0$. Assume that $i$ is a positive integer. Let
 $\cpx{u}$ be a morphism in
 $\Hom_{\Kb{\pmodcat{A}}}(\cpx{T},\cpx{T}[i])$. Then we have the
 following commutative diagram
$$\xymatrix{
  \cdots \ar[r] & T^{-i-1}\ar[r]^{d_T^{-i-1}}\ar[d]^{u^{-i-1}} & T^{-i}\ar[r]^{d_T^{-i}}\ar[d]^{u^{-i}} &
  T^{-i+1}\ar[r]^{d_T^{-i+1}}\ar[d]^{u^{-i+1}}
  &\cdots\ar[r] & T^{-1}\ar[r]^{d^{-1}}\ar[d]^{u^{-1}} & T^0\ar[r]^{d_T^0}\ar[d]^{u^0} & T^1\ar[r]^{d_T^1}\ar[d]^{u^1} &\cdots\\
    \cdots \ar[r] & T^{-1}\ar[r]^{d_T^{-1}} & T^{0}\ar[r]^{d_T^0} &
    T^{1}\ar[r]^{d_T^1}
  &\cdots\ar[r] & T^{i-1}\ar[r]^{d_T^{i-1}} & T^i\ar[r]^{d_T^i} & T^{i+1}\ar[r]^{d_T^{i+1}} &\cdots
}$$ Since $\Hom_A(P, Q)=0$, we have $u^k=0$ for all $-i<k<0$. By
definition, $T^{-i}\in\add(_AP)$. Since $d_T^{-1}=f_1$ is a right
$\add(_AP)$-approximation, there is a map $h^{-i}: T^{-i}\lra
T^{-1}$ such that $u^{-i}=h^{-i}d_T^{-1}$. Thus,
$(u^{-i-1}-d_T^{-i-1}h^{-i})d_T^{-1}=d_T^{-i-1}u^{-i}-d_T^{-i-1}h^{-i}d_T^{-1}=d_T^{-i-1}u^{-i}-d_T^{-i-1}u^{-i}=0$.
Since $d_T^{-2}$ is a right $add(_AP)$-approximation of $\Ker
(d_T^{-1})$, there is a map $h^{-i-1}: T^{-i-1}\lra T^{-2}$ such
that $u^{-i-1}-d_T^{-i-1}h^{-i}=h^{-i-1}d_T^{-2}$, that is
$u^{-i-1}=d_T^{-i-1}h^{-i}+h^{-i-1}d_T^{-2}$. Similarly, for each
integer $k<-i-1$, there are maps $h^{k+1}: T^{k+1}\lra T^{k+i}$ and
$h^{k}: T^{k}\lra T^{k+i-1}$ such that
$u^k=d_T^kh^{k+1}+h^kd_T^{k+i-1}$. Defining $h^k=0$ for all
$-i<k\leq 0$, we have $u^k=d_T^kh^{k+1}+h^kd_T^{k+i-1}$ for all
$k<0$. Similarly, we can prove that
$u^k=d_T^kh^{k+1}+h^kd_T^{k+i-1}$ for $k\geq 0$. Altogether, we have
shown that $\cpx{u}=0$ in $\Kb{\pmodcat{A}}$. Hence
$\Hom_{\Kb{\pmodcat{A}}}(\cpx{T},\cpx{T}[i])=0$ for all $i>0$. By an
analogous proof, we have
$\Hom_{\Kb{\pmodcat{A}}}(\cpx{T},\cpx{T}[i])=0$ for all $i<0$.
Finally, since $P[r]$ and $Q[-s]$ are in $\add(\cpx{T})$, we deduce
that $_AA$ is in the triangulated subcategory of $\Kb{\pmodcat{A}}$
generated by $\add(\cpx{T})$. Hence $\add(\cpx{T})$ generates
$\Kb{\pmodcat{A}}$ as a triangulated category, and consequently
$\cpx{T}$ is a tilting complex over $A$.

(2) Let $F:\Db{{A}}\lra\Db{{B}}$ be a derived equivalence induced by
$\cpx{T}$. Also, we use $F$ to denote the equivalence between
$\Kb{\pmodcat{A}}$ and $\Kb{\pmodcat{B}}$ induced by $F$. Set
$\bar{P}:=\Hom_{\Kb{\pmodcat{A}}}(\cpx{T}, P[r])\simeq F(P[r])$,
$\bar{Q}:=\Hom_{\Kb{\pmodcat{A}}}(\cpx{T}, Q[-s])\simeq F(Q[-s])$,
and $\bar{U}:=\Hom_{\Kb{\pmodcat{A}}}(\cpx{T},\cpx{T_{P,Q}})\simeq
F(\cpx{T_{P,Q}})$. Then $F(P)\simeq \bar{P}[-r]$ and $F(Q)\simeq
\bar{Q}[s]$.  For simplicity, we list some subcomplexes of
$\cpx{T}$:
$$\begin{array}{ll}
\cpx{P}: & 0\lra P^{-r}\lra\cdots\lra P^{-1}\lra 0,\\
\cpx{Q}: & 0\lra Q^{1}\lra\cdots\lra Q^{s}\lra 0,\\
\cpx{R}: & 0\lra A\lra Q^{1}\lra\cdots\lra Q^{s}\lra 0.
\end{array}$$
By Lemma \ref{lemmaFormofFX}, $F(\cpx{P})$ is isomorphic to a
complex $\cpx{\bar{P}}$ in $\Kb{\add(\bar{P})}$ such that
$\bar{P}^i=0$ for all $i<0$ and all $i\geq r$. The complex
$F(\cpx{Q})$ is isomorphic to a complex $\cpx{\bar{Q}}$ in
$\Kb{\add(\bar{Q})}$ with $\bar{Q}^i=0$ for all $i>0$ and all $i\leq
-s$. Note that there is a distinguished triangle in
$\Kb{\pmodcat{A}}$
$$\cpx{P}[-1]\lra\cpx{R}\lra \cpx{T_{P,Q}}\lra \cpx{P}.$$
Applying $F$, we get a distinguished triangle in $\Kb{\pmodcat{B}}$:
$$F(\cpx{P})[-1]\lra F(\cpx{R})\lra  F(\cpx{T_{P,Q}})\lra F(\cpx{P}).$$
Hence $F(\cpx{R})$ is isomorphic to a complex of the following form:
$$0\lra \bar{U}\lra \bar{P}^0\lra\bar{P}^{1}\lra\cdots$$
with $\bar{U}$ in degree $0$. Next we have a distinguished triangle
$$\cpx{Q}\lra\cpx{R}\lra A\lra \cpx{Q}[1]$$
in $\Kb{\pmodcat{A}}$. Applying $F$, we see that $F(A)$ is
isomorphic to a complex $\cpx{\bar{T}}$ of the form
$$\cdots\lra \bar{Q}^{-1}\lra \bar{Q}^0\lra \bar{U}\lra \bar{P}^0\lra\bar{P}^{1}\lra\cdots, $$
where $\bar{U}$ is in degree zero. Note that $\cpx{\bar{T}}$ is a
tilting complex associated to $F^{-1}$ since
$F^{-1}(\cpx{\bar{T}})\simeq A$. Since $\add(_AP)=\add(\nu_AP)$ and
$\add(_AQ)=\add(\nu_AQ)$, we have
$\add(_B\bar{P})=\add(\nu_B\bar{P})$ and
$\add(_B\bar{Q})=\add(\nu_B\bar{Q})$. Thus, we have
$\add(_AT^{\pm})=\add(\nu_AT^{\pm})$ and
$\add(_B\bar{T}^{\pm})=\add(\nu_B\bar{T}^{\pm})$. By Theorem
\ref{Theorem 1}, the statement $(2)$ follows.
\end{proof}

To illustrate Proposition \ref{ApproxConstruction}, we give an
example. Let $A$ be the finite-dimensional $k$-algebra given by the
quiver
$$\xymatrix{
\bullet\ar@<2pt>[r]^{\alpha}
&\bullet\ar@<2pt>[l]^(1){1}^(0){2}^{\alpha'}\ar@<2pt>[r]^{\beta}
&\bullet\ar@<2pt>[l]^(0){3}^{\beta'}\ar@<2pt>[r]^{\gamma}
&\bullet\ar@<2pt>[l]^(0){4}^{\gamma'} }$$ with relations
$\alpha'\alpha=\beta\beta'=\alpha\beta=\beta\gamma=\beta'\alpha'=\gamma'\beta'=
\beta'\beta-\gamma\gamma'=0$. We use $P_i$ to denote the
indecomposable projective $A$-module  corresponding to the vertex
$i$ for $i=1,2,3,4$. The Loewy structure of the projective
$A$-modules can be listed as follows.
$$
P_1: \begin{array}{c}1\\2\\1\end{array}\quad P_2:
\begin{array}{ccc}& 2 \\ 1 & & 3\\&&\end{array}\quad P_3: \begin{array}{ccc}& 3 \\ 2 & &
4\\&3&\end{array}\quad P_4: \begin{array}{c}4\\3\\4\end{array}
$$
Let $P:=P_1$ and $Q:=P_3\oplus P_4$. Then we have
$\add(_AP)=\add(\nu_AP)$, $\add(_AQ)=\add(\nu_AQ)$, and $\Hom_A(P,
Q)=0$. Using Proposition \ref{ApproxConstruction}, we have a tilting
complex $\cpx{T}$ over $A$. The indecomposable direct summands of
$\cpx{T}$ are:
$$\begin{array}{rl}
\cpx{T_1}: & 0\lra P_1\lra 0\\
\cpx{T_2}: & 0\lra P_1\lra P_2\lra P_3\lra 0\\
\cpx{T_3}: & \hspace{2.3cm} 0\lra P_3\lra 0\\
\cpx{T_4}: & \hspace{2.3cm} 0\lra P_4\lra 0\\
\end{array}$$
A calculation shows that the algebra $B:=\End_{\Db{A}}(\cpx{T})$ is
 given by the quiver
$$\xymatrix@R=4mm{
&&\bullet \ar[ld]_{\delta} \\
\bullet\ar@<2pt>[r]^{\alpha} &
\bullet\ar@<2pt>[l]^(1){1}^(0){2}^{\alpha'}\ar[rd]_{\beta} \\
&& \bullet \ar[uu]_(0){4}_(1){3}_{\gamma} }$$ with relations
$\alpha'\alpha=\alpha\beta=\delta\alpha'=\beta\gamma\delta=\gamma\delta\beta\gamma=0$.
By Proposition \ref{ApproxConstruction}, $\cpx{T}$ induces an
iterated almost $\nu$-stable derived equivalence between $A$ and
$B$. Therefore, $A$ and $B$ are also stably equivalent of Morita
type.

\medskip
The following proposition shows how we can construct iterated almost
$\nu$-stable derived equivalences inductively.
\begin{Proposition}
 Let $F:\Db{A}\lra\Db{B}$ be an iterated almost $\nu$-stable derived
 equivalence between two finite-dimensional algebras $A$ and $B$ over a field $k$, and let
 $\phi_{F}$ be the stable equivalence induced by $F$ (see, {\rm Theorem \ref{TheoremStableFunctor}}). Then we have
 the following:

 $(1)$ For each $A$-module $X$, there is an iterated almost
 $\nu$-stable derived equivalence between the endomorphism algebras $\End_A(A\oplus X)$ and $\End_B(B\oplus
 \phi_F(X))$;

 $(2)$ For a finite-dimension self-injective $k$-algebra $C$, there
 is an iterated almost $\nu$-stable derived equivalence between
 $A\otimes_kC$ and $B\otimes_kC$.\label{corollaryInductive}
\label{InductiveConstruction}
\end{Proposition}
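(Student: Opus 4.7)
For \textbf{part (1)}, the plan is to reduce to the base case of a single almost $\nu$-stable derived equivalence by decomposing $F$ and iterating. Write $F\simeq F_m\circ\cdots\circ F_1$, where each $F_i$ or $F_i^{-1}$ is almost $\nu$-stable, with intermediate algebras $A=A_0,A_1,\ldots,A_m=B$, and set $X_0:=X$, $X_i:=\phi_{F_i}(X_{i-1})$, so that Theorem \ref{TheoremStableFunctor}(4) gives $X_m=\phi_F(X)$. At each step I would produce an almost $\nu$-stable derived equivalence
$G_i:\Db{\End_{A_{i-1}}(A_{i-1}\oplus X_{i-1})}\lra\Db{\End_{A_i}(A_i\oplus X_i)}$
connecting the enlarged endomorphism algebras along the tower; composing all $G_i$ will then yield the desired iterated almost $\nu$-stable derived equivalence between $\End_A(A\oplus X)$ and $\End_B(B\oplus\phi_F(X))$, since iterated almost $\nu$-stable equivalences are closed under composition.

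For the base case (with $F$ almost $\nu$-stable), the construction of $G:=G_1$ would proceed via a tilting complex over $\Lambda:=\End_B(N)$, where $N:=B\oplus\phi_F(X)$. By Lemma \ref{complexTX}, $\cpx{\bar{T}_X}:=F(X)$ is radical with $\bar{T}_X^0=\phi_F(X)$ and $\bar{T}_X^i\in\nust{B}$ for $i\neq 0$. Since $\nust{B}\subseteq\pmodcat{B}\subseteq\add(N)$, every term of $\cpx{T'}:=\cpx{\bar{T}}\oplus\cpx{\bar{T}_X}$ lies in $\add(N)$. Applying the additive equivalence $\Hom_B(N,-):\add(N)\ra\pmodcat{\Lambda\opp}$ to $\cpx{T'}$ yields a complex $\cpx{S}$ of projective $\Lambda\opp$-modules; I would verify that $\cpx{S}$ is a tilting complex with endomorphism ring $\End_A(A\oplus X)\opp$. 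Almost $\nu$-stability of the induced equivalence will then follow from Theorem \ref{Theorem 1}, since the non-zero-degree terms of $\cpx{S}$ correspond under the equivalence to terms of $\cpx{T'}$ lying in $\nust{B}$.

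For \textbf{part (2)}, I would take the tilting complex $\cpx{T}$ associated to $F$ and tensor termwise with $C$ over $k$ to form $\cpx{T}\otimes_k C$ over $A\otimes_k C$. A standard computation confirms that this is a tilting complex with endomorphism algebra $B\otimes_k C$, hence induces a derived equivalence. For the iterated almost $\nu$-stable condition, I invoke $\nu_{A\otimes_k C}\simeq\nu_A\otimes\nu_C$: since $C$ is self-injective, $\nu_C^j C$ is projective-injective for every $j\geq 0$, so $T^i\in\nust{A}$ forces $\nu_{A\otimes_k C}^j(T^i\otimes_k C)=\nu_A^jT^i\otimes_k\nu_C^jC$ to be projective-injective, whence $T^i\otimes_k C\in\nust{A\otimes_k C}$. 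The analogous assertion on the $B$-side for $\bar{T}^i\otimes_k C$, combined with Theorem \ref{Theorem 1}, delivers the iterated almost $\nu$-stable condition.

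The main obstacle lies in the base case of part (1), specifically in verifying the tilting conditions on $\cpx{S}$. Via the equivalence $\Hom_B(N,-)$, these reduce to computing $\Hom_{\Kb{B}}(\cpx{T'},\cpx{T'}[n])$, a homotopy-category Hom between complexes whose degree-zero terms are not necessarily projective, so the familiar $\Kb{}=\Db{}$ comparison is not automatic. The crucial leverage is that all non-zero-degree terms of $\cpx{T'}$ lie in $\nust{B}$ and are thus projective-injective, placing the relevant computations within the scope of Lemma \ref{LemmaKDiso}; carefully exploiting this structure, together with the identification $\cpx{T'}\simeq F(A\oplus X)$ in $\Db{B}$ to transport back to $\Db{A}$, is the heart of the argument.
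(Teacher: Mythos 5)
Your overall strategy for part (1) — decompose $F\simeq F_m\cdots F_1$, track $X_i=\phi_{F_i}(X_{i-1})$ using Theorem \ref{TheoremStableFunctor}(4), and compose the step equivalences — is exactly the paper's reduction; but where the paper then simply quotes \cite[Corollary 1.3]{HuXi3} (after identifying $\phi_{F_i}$ with $\bar{F}_i$), you propose to prove the base case directly, and your sketch has a genuine gap at precisely the point you call the heart of the argument. To see that $\cpx{S}=\Hom_B(N,\cpx{T'})$ is a tilting complex you need $\Hom_{\Kb{B}}(\cpx{T'},\cpx{T'}[n])=0$ for all $n\neq 0$, where $\cpx{T'}=\cpx{\bar{T}}\oplus\cpx{\bar{T}_X}$. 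For $n<0$ (and $n=0$) Lemma \ref{LemmaKDiso} does apply and transport to $\Db{A}$ works. But for $n>0$ it cannot: the lemma needs an $m$ with $T'^i$ projective for $i>m$ and $T'^{i}$ injective for $i<m+n$, and since the degree-zero term $\bar{T}^0\oplus\phi_F(X)$ is in general not injective, the two requirements force $m\geq 0$ and $m\leq -n$, which is impossible. Worse, the statement you would obtain by transporting to $\Db{A}$ is $\Hom_{\Db{A}}(A\oplus X,(A\oplus X)[n])=\Ext^n_A(A\oplus X,A\oplus X)$, which is generally nonzero for $n>0$; so the vanishing you need is a strictly homotopy-level fact that is \emph{not} equivalent to any derived-category computation. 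It must be proved by constructing explicit homotopies, exploiting that the positive-degree terms of $\cpx{T'}$ are projective and the negative-degree terms injective (this is essentially the content of the construction behind \cite[Corollary 1.3]{HuXi3}, in the spirit of the approximation argument in Proposition \ref{ApproxConstruction}); your proposal does not supply this argument, and the route you name for it fails. The generation condition for $\add(\cpx{S})$ in $\Kb{\pmodcat{\Lambda\opp}}$ is likewise asserted without proof.

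Part (2) is essentially sound and is in fact more direct than the paper, which argues stepwise as in \cite[Proposition 6.2]{HuXi3}: tensoring $\cpx{T}$ with $C$ and using $\nu_{A\otimes_kC}\simeq\nu_A\otimes\nu_C$ together with self-injectivity of $C$ does give $T^i\otimes_kC\in\nust{A\otimes_kC}$ for $i\neq 0$. To invoke Theorem \ref{Theorem 1}, however, you still need to know that the tilting complex associated to the quasi-inverse of the induced equivalence is (isomorphic in $\Cb{B\otimes_kC}$ to) $\cpx{\bar{T}}\otimes_kC$; this is plausible but should be justified (e.g., by computing the image of $A\otimes_kC$ under the inverse equivalence), and the Künneth-type identification $\End(\cpx{T}\otimes_kC)\simeq B\otimes_kC$ deserves at least a sentence.
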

\begin{proof}
Suppose that $F\simeq F_1F_2\cdots F_n$ such that $F_i$ or
$F_i^{-1}$ is almost $\nu$-stable for all $i$. By Theorem
\ref{TheoremStableFunctor}, we have $\phi_F\simeq
\phi_{F_1}\phi_{F_2}\cdots\phi_{F_n}$. By the remark after Theorem
\ref{TheoremStableFunctor}, we know that $\phi_{F_i}$ coincides with
the $\bar{F}_i$ considered in \cite{HuXi3} for all $i$. Thus, the
statements (1) follows  from \cite[Corollary 1.3]{HuXi3}. The proof
of (2) is similar to that of \cite[Proposition 6.2]{HuXi3}.
\end{proof}

Let us recall from \cite{HuXi4} the definition of
$\Phi$-Auslander-Yoneda algebras. A subset $\Phi$ of the set of
natural numbers $\mathbb{N}$ is called admissible provided that: (1)
$0\in \Phi$; (2) If $i+j+k\in\Phi$ for $i, j, k\in\Phi$, then
$i+j\in\Phi$ implies that $j+k\in\Phi$. For instance, the sets
$\mathbb{N}$, $\{0, 1, \cdots, n\}$ are admissible subsets of
$\mathbb{N}$. Suppose that $\Phi$ be an admissible subset of
${\mathbb N}$. Let $A$ be an Artin algebra, and let $X$ be an
$A$-module.  Now we consider the Yoneda algebra $\Ext_A^*(X,
X)=\bigoplus_{i\geq 0}\Hom_{\Db{A}}(X, X[i])$ of $X$, and define
$\Ex{A}{\Phi}{X}:=\bigoplus_{i\in\Phi}\Hom_{\Db{A}}(X, X[i])$ with
multiplication: for $a_i\in\Hom_{\Db{A}}(X, X[i])$ and
$a_j\in\Hom_{\Db{A}}(X, X[j])$, we define $a_i\cdot a_j=a_ia_j$ if
$i+j\in\Phi$, and zero otherwise. Then one can check that
$\Ex{A}{\Phi}{X}$ is an associated algebra. If $\Phi=\{0\}$, then
$\Ex{A}{\Phi}{X}$ is isomorphic to $\End_A(X)$. If
$\Phi=\mathbb{N}$, then $\Ex{A}{\Phi}{X}$ is just the Yoneda algebra
of $X$.
\begin{Proposition}
   Let $F:\Db{A}\lra\Db{B}$ be an iterated almost $\nu$-stable
   derived equivalence between two Artin algebras $A$ and $B$.
   Suppose that $\Phi$ is an admissible subset of $\mathbb{N}$. Then
   we have the following:

   $(1)$ For any $A$-module $X$, there is a derived equivalence
   between the $\Phi$-Auslander-Yoneda algebras $\Ex{A}{\Phi}{A\oplus X}$ and $\Ex{B}{\Phi}{B\oplus
   \phi_F(X)}$;

   $(2)$ If $\Phi$ is a finite set, then for any $A$-module $X$,
   there is an iterated almost $\nu$-stable derived equivalence
   between $\Ex{A}{\Phi}{A\oplus X}$ and $\Ex{B}{\Phi}{B\oplus
   \phi_F(X)}$.
\end{Proposition}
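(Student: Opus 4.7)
The plan is to use the decomposition afforded by the definition of iterated almost $\nu$-stability and invoke, step by step, the main results of \cite{HuXi4} on $\Phi$-Auslander--Yoneda algebras. Write $F \simeq F_n \cdots F_1$ with each $F_i : \Db{A_{i-1}} \lra \Db{A_i}$ or its quasi-inverse $F_i^{-1}$ being almost $\nu$-stable, and with $A_0 = A$, $A_n = B$. By Theorem \ref{TheoremStableFunctor}$(4)$ we have $\phi_F \simeq \phi_{F_n} \cdots \phi_{F_1}$, and by the remark following that theorem each $\phi_{F_i}$, when restricted to the stable module category, coincides with the stable functor $\bar{F}_i$ of \cite{HuXi3}. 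Set $X_0 := X$ and inductively $X_i := \phi_{F_i}(X_{i-1})$, so that $X_n \simeq \phi_F(X)$ in $\stmodcat{B}$.

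For part $(1)$, we would invoke the relevant theorem of \cite{HuXi4}: for each almost $\nu$-stable derived equivalence (or its quasi-inverse) $F_i$ and each $A_{i-1}$-module $Y$, there is a derived equivalence
$$\Db{\Ex{A_{i-1}}{\Phi}{A_{i-1} \oplus Y}} \lra \Db{\Ex{A_i}{\Phi}{A_i \oplus \phi_{F_i}(Y)}}.$$
Applying this with $Y = X_{i-1}$ at the $i$-th step and composing the resulting derived equivalences yields a derived equivalence between $\Ex{A}{\Phi}{A \oplus X}$ and $\Ex{B}{\Phi}{B \oplus \phi_F(X)}$. The ambiguity of $\phi_F(X)$ up to projective-injective summands in $\nust{B}$ is harmless, since adding such a summand to the second factor alters the Auslander--Yoneda algebra only up to Morita equivalence, hence up to derived equivalence.

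For part $(2)$, assume $\Phi$ is finite, so each intervening $\Phi$-Auslander--Yoneda algebra is again an Artin algebra. One must then verify that the tilting complex supplied by \cite{HuXi4}'s construction at each step satisfies the criterion of Theorem \ref{Theorem 1}: its terms in nonzero degrees should lie in the $\nu$-stable subcategory $\nust{(-)}$ of the associated $\Phi$-Auslander--Yoneda algebra. This requires first identifying the $\nu$-stable projective modules of $\Ex{A_{i-1}}{\Phi}{A_{i-1} \oplus X_{i-1}}$, and then checking the required Nakayama stability term by term, exploiting the fact that $F_i$ sends $\nu$-stable projectives to $\nu$-stable projectives (Lemma \ref{lemmaEtoE}). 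Once each step is shown to be iterated almost $\nu$-stable, their composition is again iterated almost $\nu$-stable, by the observation in the introduction that the class of such equivalences is closed under composition. The main obstacle lies precisely in this last verification: inspecting \cite{HuXi4}'s tilting complex and matching its nonzero-degree terms with the $\nust{}$ projectives of the Yoneda algebra. The finiteness of $\Phi$ is indispensable, as an infinite $\Phi$ would in general produce unbounded complexes and take one outside the framework of Theorem \ref{Theorem 1}.
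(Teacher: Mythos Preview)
Your proposal is correct and follows essentially the same route as the paper: decompose $F$ into almost $\nu$-stable pieces, use Theorem~\ref{TheoremStableFunctor}(4) to factor $\phi_F$ accordingly, and invoke \cite[Theorem~3.4]{HuXi4} at each step (the paper phrases this as ``similar to the proof of Proposition~\ref{InductiveConstruction}(1)''). Your more explicit discussion of the verification needed for part~(2) --- checking that the tilting complex from \cite{HuXi4} has its nonzero-degree terms in $\nust{}$ --- is exactly the content packaged into \cite[Theorem~3.4]{HuXi4} for finite $\Phi$, so the ``obstacle'' you flag is already absorbed by that citation.
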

\begin{proof}
 Using the result \cite[Theorem 3.4]{HuXi4}, the proof is similar to
 that of Proposition \ref{InductiveConstruction} (1).
\end{proof}

\bigskip {\parindent=0pt\Large\bf Acknowledgement}

\medskip
This work is partially supported by China Postdoctoral Science
Foundation (No. 20080440003). The author also thanks the Alexander
von Humboldt Foundation for support.

\medskip
{\small {\sc Wei Hu},

School of Mathematical Sciences, Beijing Normal University, Beijing,
100875, China
\end{document}